

\documentclass[letterpaper, 10 pt, conference]{ieeeconf}

\IEEEoverridecommandlockouts
\overrideIEEEmargins

\makeatletter

\let\proof\@undefined
\let\endproof\@undefined
\makeatother

\usepackage{makeidx}         
\usepackage{graphicx}        
\usepackage{multicol}        
\usepackage[bottom]{footmisc}
\usepackage{latexsym}


\usepackage[caption=false,font=footnotesize]{subfig}
\usepackage{cite,color,comment,xspace}
\usepackage{amsfonts}
\usepackage{amsmath}
\usepackage{amssymb}
\usepackage{amsthm}
\usepackage{algorithm}
\usepackage{algorithmic}
\usepackage{url,cite}
\usepackage{times}
\usepackage{enumerate}
\usepackage{epstopdf}
\usepackage{epsfig}
\usepackage{array}
\usepackage{fixltx2e}

\graphicspath{{fig/}{../fig/}}




\setlength{\topsep}{1pt}
\newtheorem{theorem}{Theorem}[section]

\newtheorem{definition}[theorem]{Definition}
\newtheorem{lemma}[theorem]{Lemma}
\newtheorem{remark}[theorem]{Remark}

\newtheorem{problem}[theorem]{Problem}


\newcommand{\B}{\mathcal{B}}
\newcommand{\N}{\mathcal{N}}
\newcommand{\R}{\mathcal{R}}
\newcommand{\Prod}{\mathcal{P}}
\newcommand{\T}{\mathcal{T}}

\newcommand{\bq}{\textbf{q}}
\newcommand{\bp}{\textbf{p}}
\newcommand{\bP}{\textbf{P}}
\newcommand{\bo}{\textbf{o}}
\newcommand{\rh}{\mathtt{RH}}

\newcommand{\Land}{\wedge}
\newcommand{\Lor}{\vee}
\newcommand{\Next}{\mathsf{X}\, }
\newcommand{\Always}{\mathsf{G} \,}
\newcommand{\Event}{\mathsf{F} \,}
\newcommand{\Until}{\, \mathsf{U} \,}

\newcommand{\n}{\nonumber\\}
\newcommand{\st}{\,|\,}



\newcommand{\be}{\begin{equation}}
\newcommand{\ee}{\end{equation}}
\newcommand{\ben}{\begin{equation*}}
\newcommand{\een}{\end{equation*}}
\newcommand{\bea}{\begin{eqnarray}}
\newcommand{\eea}{\end{eqnarray}}
\newcommand{\bean}{\begin{eqnarray*}}
\newcommand{\eean}{\end{eqnarray*}}
\newcommand{\ba}{\begin{array}}
\newcommand{\ea}{\end{array}}
\newcommand{\leftm}{\left[\begin{array}}
\newcommand{\rightm}{\end{array}\right]}



\newcommand{\ie}{{\it i.e., }}
\newcommand{\eg}{{\it e.g., }}

\definecolor{forestgreen}{rgb}{0,0.6,0.6}
\definecolor{brown}{rgb}{0.6,0.2,0.2}

\setlength{\marginparwidth}{0.58in}


\DeclareMathOperator*{\argmax}{arg\,max}


\newcommand\oprocendsymbol{\hbox{$\bullet$}}
\newcommand\oprocend{\relax\ifmmode\else\unskip\hfill\fi\oprocendsymbol}


\makeindex             

\title{\LARGE \bf
Receding Horizon Temporal Logic Control\\ for Finite Deterministic Systems}
\author{Xuchu Ding, Mircea Lazar and Calin Belta
\thanks{X.C. Ding and C. Belta are with the Dept. of Mechanical Engineering at Boston University, Brookline, MA 02446, USA.   Email: {\tt\small \{xcding,cbelta\}@bu.edu}. Mircea Lazar is with Dept. of Electrical Engineering at Eindhoven University of Technology, Eindhoven, The Netherlands. Email: {\tt\small m.lazar@tue.nl}. This work is partially supported by the ONR-MURI Award N00014-09-1051 at Boston University and the STW Veni Grant 10230 at Eindhoven University of Technology.}
}
\begin{document}
\maketitle \thispagestyle{empty} \pagestyle{empty}

\begin{abstract}
This paper considers receding horizon control of finite deterministic systems, which must satisfy a high level, rich specification expressed as a linear temporal logic formula. Under the assumption that time-varying rewards are associated with states of the system and they can be observed in real-time, the control objective is to maximize the collected reward while satisfying the high level task specification. In order to properly react to the changing rewards, a controller synthesis framework inspired by model predictive control is proposed, where the rewards are locally optimized at each time-step over a finite horizon, and the immediate optimal control is applied. By enforcing appropriate constraints, the infinite trajectory produced by the controller is guaranteed to satisfy the desired temporal logic formula. Simulation results demonstrate the effectiveness of the approach.
\end{abstract}

\section{Introduction}
\label{sec:intro}

This paper considers the problem of controlling a deterministic discrete-time system with a finite state-space, which is also referred to as a finite transition system. Such systems can be effectively used to capture behaviors of more complex dynamical systems, and as a result, greatly reduce the complexity of control design.

A finite transition system can be constructed from a continuous system via an ``abstraction'' process.  For example, for an autonomous robotic vehicle moving in an environment, the motion of the vehicle can be abstracted to a finite system through a partition of the environment.   The set of states can be seen as a set of labels for the regions in the partition, and each transition corresponds to a controller driving the vehicle between two adjacent regions.  By partitioning the environment into simplicial, rectangular or polyhedral regions, continuous feedback controllers that drive a robotic system from any point inside a region to a desired facet of an adjacent region have been developed for linear \cite{KB-TAC08-LTLCon}, multi-affine \cite{HabColSchup06}, piecewise-affine \cite{desai2002controlling, habets2007control, Wongpiromsarn:2009p19}, and non-holonomic (unicycle) \cite{belta2005discrete, lindemann2007real} dynamical models.  By relating the initial continuous dynamical system and the abstract discrete finite system with simulation or bisimulation relations \cite{Milner89}, the abstraction process allows one to solve a control problem for the more complex continuous system with the ``equivalent'' abstract system.

It has been proposed by several authors \cite{KB-TAC08-LTLCon,Hadas-ICRA07,Karaman_mu_09,Loizou04,Wongpiromsarn:2009p19} to use temporal logics, such as linear temporal logic (LTL) and computation tree logic (CTL) \cite{Clarke99}, as specification languages for finite transition systems due to their well defined syntax and semantics.  These logics can be easily used to specify complex behavior, and in particular with LTL, persistent mission task such as ``pick up items at the region $\mathtt{pickup}$, and then drop them off at the region $\mathtt{dropoff}$, infinitely often, while always avoiding $\mathtt{unsafe}$ regions''.  The applications of these temporal logics in computer science in the area of model checking~\cite{Clarke99} and temporal logic game~\cite{Piterman-2006} has resulted in off-the-shelf tools and algorithms that can be readily adapted to synthesize provably correct control strategies \cite{KB-TAC08-LTLCon,Hadas-ICRA07,Karaman_mu_09,Loizou04,Wongpiromsarn:2009p19}.
%

While the works mentioned above address the temporal logic controller synthesis problem, several problems and questions remain to be answered.  In particular, the problem of combining temporal logic controller synthesis with optimality with respect to a suitable cost function remains to be solved.  This problem becomes even more difficult if the optimization problem depends on time-varying parameters, \eg dynamic events that occur during the operation of the plant.  For traditional control problems (without temporal logic constraints) and dynamical systems, this problem can be effectively addressed using a model predictive control (MPC) approach (see \eg \cite{rawlings2009}), which has reached a mature level in both academia and industry, with many successful implementations. The basic MPC set-up consists of the following sequence of steps: at each time instant, a cost function of the current state is optimized over a finite horizon, only the first element of the optimal finite sequence of controls is applied and the whole process is repeated at the next time instant for the new measured state. Thus, MPC is also referred to as receding horizon control. Since the finite horizon optimization problem is solved repeatedly at each time instant, real-time dynamical events can be effectively managed.

However, it is not yet well-understood how to combine a receding horizon control approach with a provably correct control strategy satisfying a temporal logic formula.  The aim of this paper is to address this issue for a specific system set-up (deterministic systems on a finite state-space) and problem formulation (dynamic optimization of rewards). More specifically, the role of the receding horizon controller is to maximize over a finite horizon the accumulated rewards associated with states of the system, under the assumption that the rewards change dynamically with time and they can only be observed in real-time.  The rewards model dynamical events that can be triggered in real-time, which is an often used model in coverage control literature \cite{li2006cooperative}.

The key challenge in this controller synthesis framework is to ensure correctness of the produced infinite trajectory and recursive feasibility of the optimization problem solved at each time-step.  For a constrained MPC optimization problem, which is solved recursively on-line, feasible at all times or recursively feasible means that if the optimization problem is feasible (has a solution) for the initial state at initial time, then it remains feasible for all future time instants, when it will be solved with a different initial condition resulting from the generated closed-loop trajectory.  A proof that the proposed receding horizon control framework satisfies both properties is provided. Similar to standard MPC, where certain \emph{terminal} constraints must be enforced in the optimization problem in order to guarantee certain properties for the system (\eg stability), the correctness of produced trajectory and recursive feasibility are also ensured via a set of suitable constraints.

This work can be seen as an extension and generalization of the set-up presented in \cite{ding2010receding}, where a similar control objective was tackled. In \cite{ding2010receding} an optimization based controller was designed, which consists of repeatedly solving a finite horizon optimal control problem every $N$ steps and implementing the complete sequence of control actions. This procedure is more close to finite-horizon optimal control than true receding horizon control and its main drawback comes from the inability of reacting to dynamical events (\ie rewards) triggered or varying during the execution of the finite trajectory. This paper removes this limitation by attaining a truly receding horizon controller for deterministic systems on a finite state-space. Another related work is \cite{Wongpiromsarn:2009p19}, where a provably correct control strategy was obtained for large scale systems by dividing the control synthesis problem into smaller sub-problems in a receding horizon like manner.  However, in \cite{Wongpiromsarn:2009p19} dynamical events were addressed differently and the specification language was restricted to a fragment of LTL, whereas in this paper full LTL expressivity is allowed.

\section{Problem Formulation and Approach}
\label{sec:problem}
In this paper, we consider a discrete-time system with a finite state space, \ie the system evolves on a graph.   Each vertex of the graph produces an output, which is a set of observations.   Such a system can be described by a finite deterministic transition system, which can be formally defined as follows.
\begin{definition}[Finite Deterministic Transition System]\label{def:tran_sys}
A finite (weighted) deterministic transition system (DTS) is a tuple $\T=(Q,q_0,\Delta,\omega,\Pi,h)$, where
\begin{itemize}
\item $Q$ is a finite set of states;
\item $q_0\in Q$ is the initial state;
\item $\Delta \subseteq Q\times Q$ is the set of transitions;
\item $\omega:\Delta\to \mathbb R^{+}$ is a weight function that assigns positive values to all transitions;
\item $\Pi$ is a set of observations; and
\item $h : Q \rightarrow 2^\Pi$ is the observation map.
\end{itemize}
For convenience of notation, we denote $q\to_{\T} q'$ if $(q,q')\in\Delta$.   We assume $\mathcal T$ to be non-blocking, \ie for each $q\in Q$, there exists $q'\in Q$ such that $q\to_{\T} q'$ (such a system is also called a Kripke structure \cite{browne1988characterizing}).
A {\em trajectory} of a DTS is an infinite sequence $\bq=q_{0}q_{1}...$ where $q_{k}\to_{\T} q_{k+1}$ for all $k\geq 0$.  A trajectory $\bq$ generates an output trajectory $\bo=o_{0}o_{1}...$, where $o_{k}=h(q_{k})$ for all $k\geq 0$.   
\end{definition}

Note the absence of the control inputs in the definition of $\T$.  This is because $\T$ is deterministic, and one can choose an available transitions at a state.   In other words, each transition $(q,q')$ corresponds to a unique control input at state $q$.  This also implies that a trajectory $\bq=q_{0}q_{1}\ldots$ can be used as a control strategy for $\T$, by simply applying the transitions $(q_{0},q_{1}), (q_{1},q_{2})$, and so on.  An example of a DTS is shown in Fig. \ref{fig:Texample}.
\begin{figure}[!ht]
	\centering
	\includegraphics[scale=.35]{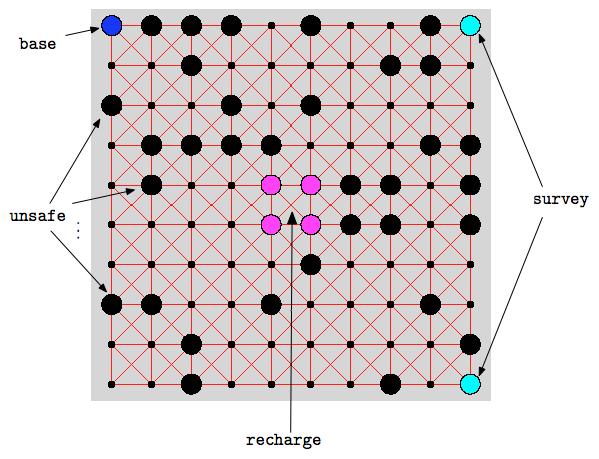}
	\caption{An example of a finite DTS $\T$ as defined in Def. \ref{def:tran_sys}.  In this example, $\mathcal T$ has $100$ states, which are at the vertices of a rectangular grid with cell size $10$.  We define the weight function $\omega$ to be the Euclidean distance between vertices, and there is a transition between two vertices if the Euclidean distance between them is less than $15$.  The set of observations is $\Pi=\{\mathtt{base}, \mathtt{survey}, \mathtt{recharge}, \mathtt{unsafe}\}$. States with no observation are shown with smaller vertices.  
	}
\label{fig:Texample}
\end{figure}

The goal of this paper is to synthesize trajectories $\bq$ of $\T$ satisfying a behavioral specification, given as a linear temporal logic formula over $\Pi$.    An LTL formula over $\Pi$ is interpreted over an (infinite) sequence $\bo=o_{0}o_{1}\ldots$, where $o_{k}\subseteq \Pi$ for all $k\geq 0$.  We say $\bq$ satisfies an LTL formula $\phi$ if it generates an output trajectory $\bo$ satisfying $\phi$. A detailed description of the syntax and semantics of LTL is beyond the scope of this paper and can be found in \cite{Clarke99}.  Roughly, an LTL formula is build up from the observations in $\Pi$, Boolean operators $\neg$ (negation), $\Lor$ (disjunction), $\Land$ (conjunction), $\longrightarrow$ (implication), and temporal operators $\Next$ (next), $\Until$ (until), $\Event$ (eventually), $\Always$ (always).   For example, the following task command in natural language: \emph{``Reach
a $\mathtt{survey}$ location infinitely often, and always avoid $\mathtt{unsafe}$ states''} can be translated to the LTL formula: $\phi:=\Always \Event \mathtt{survey} \Land \Always \neg \mathtt{unsafe}$.

The system is assumed to operate in an environment with dynamical events.  In this paper, these events are modelled by a reward process $\R:Q\times \mathbb N\rightarrow \mathbb R^{+}$, \ie the reward associated with state $q\in Q$ at time $k$ is $\R(q,k)$.  Note that rewards are associated with states in $Q$ in a time varying fashion.  We do not make any assumptions on the dynamics governing the rewards, but we make the natural assumption that, at time $k$, the system can only observe the rewards in a neighborhood $\N(q, k)\subseteq Q$ of the current state $q$.   In this paper, we assume that the reward process $\R$ is unknown and reward values must be observed and acted upon in real-time.  The problem in the case when knowledge of $\R$ is given a-priori is interesting, but will be addressed in future research.


%

The problem considered in this paper is formally stated next.
\begin{problem}\label{problem:main}
Given a transition system $\mathcal T$ and an LTL formula $\phi$ over the set of observations of $\T$, design a controller that maximizes the collected reward locally, while it ensures that the produced infinite trajectory satisfies $\phi$.
\end{problem}

Since the rewards are time-varying and can only be observed around the current state, inspirations from the area of MPC are drawn (see, e.g. \cite{rawlings2009}) with the aim of synthesizing a controller such that the rewards are maximized in a receding horizon fashion.
At time $k$ with state $q_{k}$, the controller generates a finite trajectory $q_{k+1}q_{k+2}\ldots q_{k+N}$ by solving an on-line optimization problem maximizing the collected rewards over a horizon $N$, and the system implements the immediate control action $(q_{k}, q_{k+1})$.   This process is then repeated at time $k+1$ and state $q_{k+1}$.  

In order to guarantee the satisfaction condition for the LTL formula $\phi$, the proposed approach is based on the construction of an automaton that captures all satisfying trajectories of $\T$. This automaton also induces a Lyapunov-like function that can be used to enforce that the trajectory of the system  satisfies the desired formula. These steps are formally described in detail in Sec.~\ref{sec:prelim}. The aforementioned function will be utilized to guarantee recursive feasibility of the developed receding horizon controller, which in turn will yield that the synthesized infinite trajectory satisfies $\phi$. The controller synthesis method is presented in Sec.~\ref{sec:control}.

\section{A tool for enforcing\\ the B\"uchi acceptance condition}
\label{sec:prelim}



In this section, we review the definition of B\"uchi automata and describe the construction of a function that enforces the satisfaction of a B\"uchi acceptance condition for the trajectories of a DTS.

%

\begin{definition}[B\"{u}chi Automaton]\label{def:omega_aut}
A (nondeterministic) B\"{u}chi automaton is a tuple $\B=(S_{\B},S_{\B0},\Sigma,\delta,F_{\B})$, where
\begin{itemize}
\item $S_{\B}$ is a finite set of states;
\item $S_{\B0}\subseteq S_{\B}$ is the set of initial states;
\item $\Sigma$ is the input alphabet;
\item $\delta: S_{\B}\times \Sigma \rightarrow 2^{S_{\B}} $ is the transition function;
\item $F_{\B}\subseteq S$ is the set of accepting states.
\end{itemize}
We denote $s\overset{\sigma}{\to}_{\B}s'$ if $s'\in \delta(s,\sigma)$.   An infinite sequence $\sigma_{0}\sigma_{1}\ldots$ over $\Sigma$ generates trajectories $s_{0}s_{1}\ldots$ where $s_{0}\in S_{\B0}$ and $s_{k}\overset{\sigma_{k}}{\to}_{\B}s_{k+1}$ for all $k\geq 0$.  $\B$ accepts an infinite sequence over $\Sigma$ if it generates at least one trajectory on $\B$, which intersects the set $F_{\mathcal B}$ infinitely many times.
\end{definition}
For any LTL formula $\phi$ over $\Pi$, one can construct a B\"{u}chi automaton with input alphabet $\Sigma= 2^{\Pi}$ accepting all and only sequences over $2^{\Pi}$ that satisfy $\phi$ \cite{Clarke99}.    We refer readers to \cite{gastin2001fast} for efficient algorithms and implementations to translate an LTL formula over $\Pi$ to a corresponding B\"{u}chi automaton $\mathcal B$.

\begin{definition}[Weighted Product Automaton]\label{def:PA}
 Given a weighted DTS $\T=(Q,q_0,\Delta,\omega,\Pi,h)$ and a B\"uchi automaton $\B = (S_{\B},S_{\B 0},2^{\Pi},\delta_\B,F_{\B})$, their product automaton, denoted by $\Prod=\T \times \B$, is a tuple $\Prod=(S_\Prod,S_{\Prod0},\Delta_\Prod, \omega_{\Prod},F_\Prod)$
where
\begin{itemize}
 \item $S_\Prod = Q \times S_{\B}$;
 \item $S_{\Prod0} = \{q_0\} \times S_{\B 0}$;
 \item $\Delta_{\Prod}\subseteq S_{\Prod}\times S_{\Prod}$ is the set of transitions, defined by: $\left((q,s),(q',s')\right)\in \Delta_{\Prod}$ iff $q\to_{\T} q'$ and $s\overset{h(q)}{\longrightarrow}_{\B}s'$;
 \item $\omega_{\Prod}:\Delta_{\Prod}\to \mathbb R^{+}$ is the weight function defined by: $\omega_{\Prod}\left((q,s),(q',s')\right)=\omega\left((q,q')\right)$
 \item $F_\Prod = Q \times F_{\B}$.
\end{itemize}
We denote $(q,s) \to_{\Prod} (q',s')$ if $((q,s), (q',s'))\in \Delta_{\Prod}$.
A trajectory ${\rm \bp}=(q_{0}, s_{0})(q_{1},s_{1})\ldots$ of $\Prod$ is an infinite sequence such that $(q_{0}, s_{0})\in S_{\Prod 0}$ and $(q_{k},s_{k}) \to_{\Prod} (q_{k+1},s_{k+1})$ for all $k\geq 0$.   Trajectory $\bp$ is called accepting if and only if it intersects  $F_{\mathcal P}$ infinitely many times.
\end{definition}
 We define the projection $\gamma_{\T}$ of $\bp$ onto $\mathcal T$ as simply removing the automaton states, \ie
\be
\label{def:projection}
\gamma_{\mathcal T}(\bp)=\bq=q_{0}q_{1}\ldots, \textrm { if } \bp=(q_{0},s_{0})(q_{1},s_{1})\ldots .
\ee
We also use the projection operator $\gamma_{\T}$ for finite trajectories (subsequences of $\bp$).  
Note that a trajectory $\bp$ on $\Prod$ is uniquely projected to a trajectory $\gamma_{\T}(\bp)$ on $\T$.
By the construction of $\Prod$ from $\T$ and $\B$, $\bp$ is accepted if and only if $\bq=\gamma_{\mathcal T}(\bp)$ satisfies the LTL formula corresponding to $\B$ \cite{Clarke99}. 

\begin{figure*}[!ht]
	\centering
	\label{fig:seq}
	\subfloat[]{	\includegraphics[scale=.4]{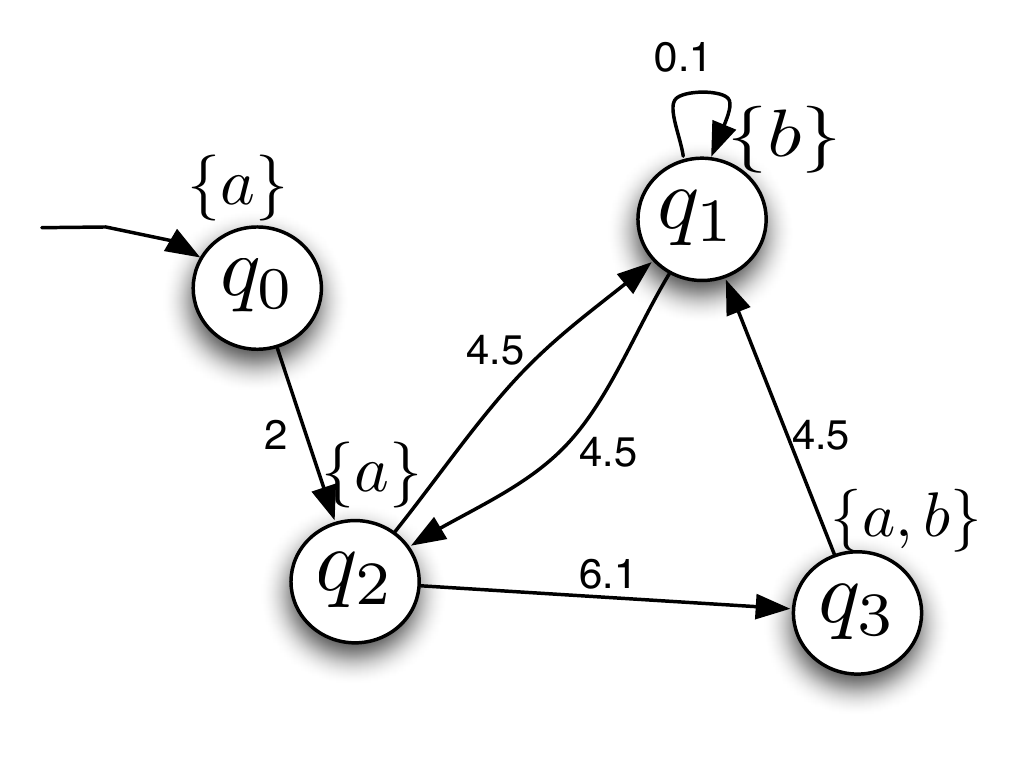}}
	\subfloat[]{	\includegraphics[scale=.4]{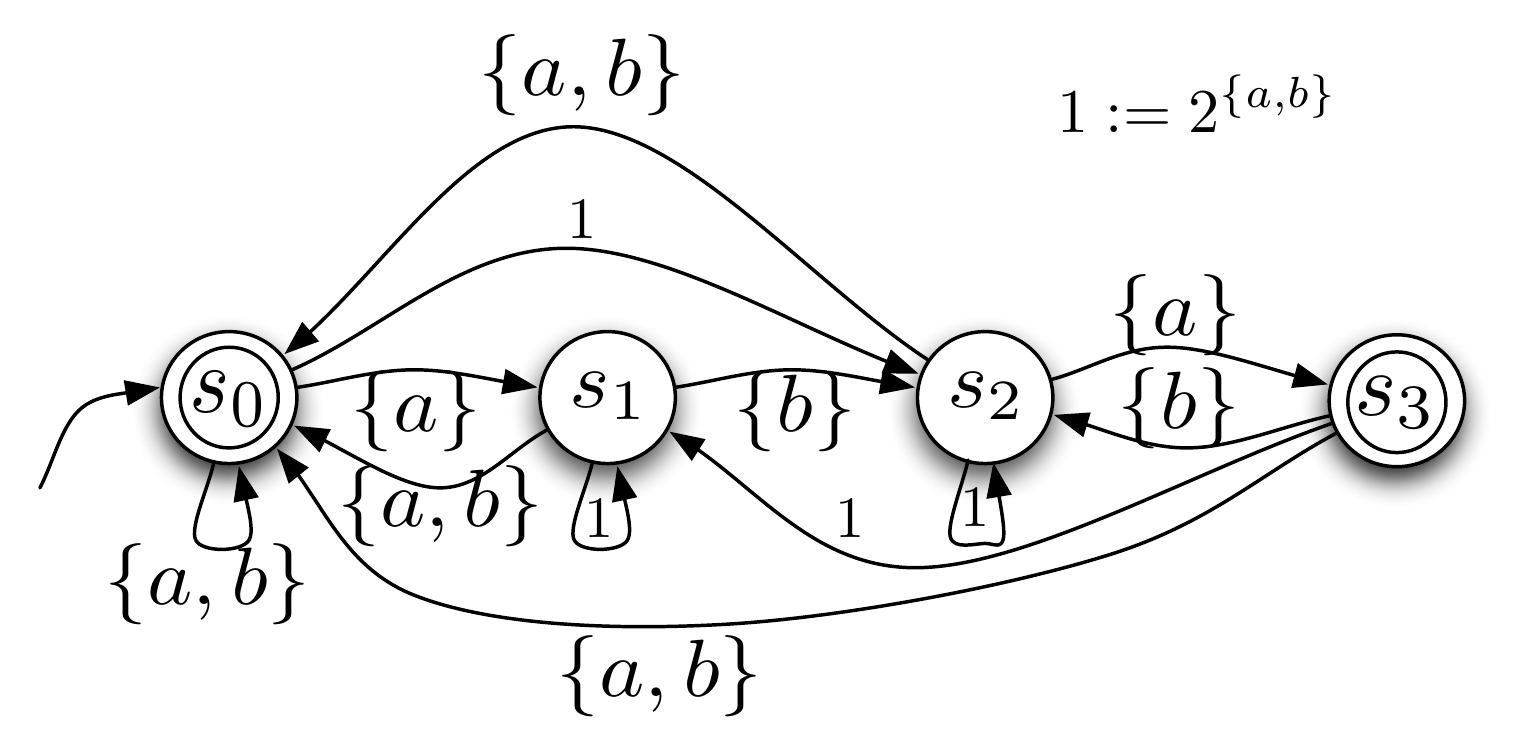}}
	\subfloat[]{	\includegraphics[scale=.4]{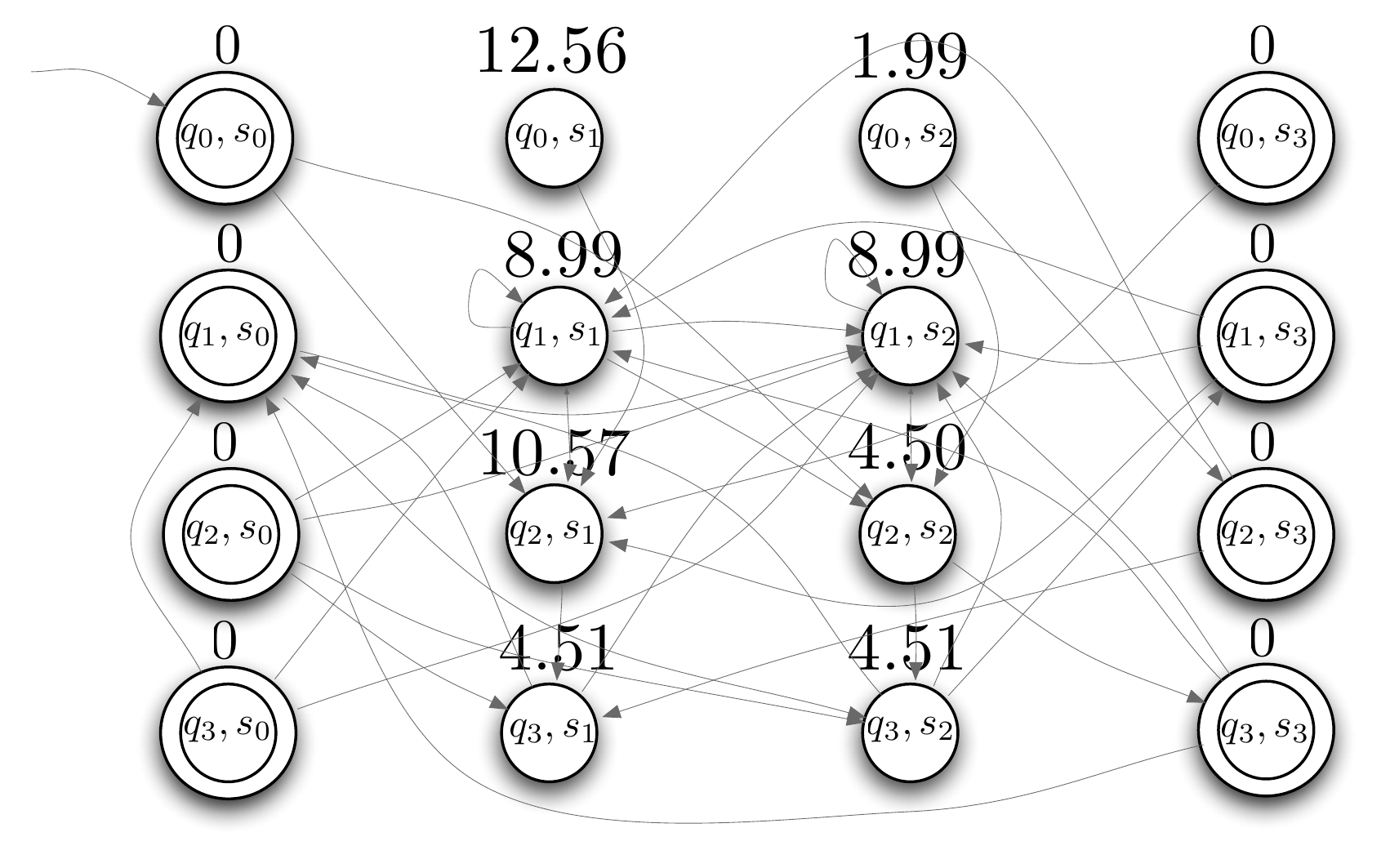}}
	\caption{The construction of the product automaton and the energy function on its states.  In this example, the set of observations is $\Pi=\{a,b\}$.  The initial states are indicated by incoming arrows.  The accepting states are marked by double-strokes.  {\bf (a)}:  A weighted DTS $\T$.  The label atop each state indicates the set of associated observations. (\ie $\{a,b\}$ means both $a$ and $b$ are observed).  The labels on the transitions indicate the weights.   {\bf (b)}: The B\"{u}chi automaton $\B$ corresponds to LTL formula $\Always (\Event (a \Land \Event b))$, translated by the tool LTL2BA~\cite{gastin2001fast}.  {\bf (c)}: The product automaton $\Prod=\T\times\B$ constructed with Def. \ref{def:PA} (the weights are inherited from $\T$ and not shown).  The number above a state $p\in S_{\Prod}$ is the energy function $V(p)$.  Note that in this example, the set $F^{\star}_{\Prod}=F_{\Prod}$ , thus $V(p)$ is the graph distance from $p$ to any accepting states.}
\label{fig:example}
\end{figure*}

In \cite{ding2010receding}, we introduced a real positive function $V$ on the states of the product automaton $\Prod$ that uses the weights $\omega_\Prod$ to enforce the acceptance condition of the automaton. Conceptually, this function resembles a Lyapunov, or energy function. While in Lyapunov theory energy functions are used to enforce that the trajectories of a dynamical system converge to an equilibrium, this ``energy" function enforces that the trajectories of $\T$ satisfy the acceptance condition of a B\"uchi automaton.  

To define the energy function, we first denote a set $A\subseteq S_{\mathcal P}$ to be {\em self-reachable} if and only if all states in $A$ can reach at least one state in $A$.
\begin{definition}[Energy function of a state in $\Prod$]
We define $F_{\mathcal P}^{\star}$ to be the largest self-reachable subset of $F_{\mathcal P}$.   The energy function $V(p)$, $p\in S_{\Prod}$ is defined as the graph distance of $p$ to the set $F^{\star}_{\mathcal P}$, \ie the accumulated weight of the shortest path from $p$ to any states in $F^{\star}_{\mathcal P}$.
\end{definition}
Fig. \ref{fig:example} shows an example of $\T$, $\B$, and their product $\Prod$, as well as the induced energy function defined on states of $\Prod$.
In \cite{ding2010receding}, we showed the following properties for $V$.
\begin{theorem}[Properties of the energy function] $V$ satisfies the following:
 \begin{enumerate}
 \item If a trajectory $\bp$ on $\Prod$ is accepting, then it cannot contain a state $p$ where $V(p)=\infty$.
 \item All accepting states in an accepting trajectory $\bp$ are in the set $F^{\star}_{\Prod}$ and have energy equal to $0$; all accepting states that are not in $F^{\star}_{\Prod}$ have energy equal to $\infty$.
 \item For each state $p\in S_{\Prod}$, if $V(p)>0$ and $V(p)\neq \infty$, then there exists a state $p'$ where $p\to_{\Prod}p'$ such that $V(p')<V(p)$.
\end{enumerate}
\label{thm:Vfacts}
\end{theorem}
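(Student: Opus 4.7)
My plan is to prove the three claims in the order (2), (1), (3), since (2) requires the most delicate analysis of $F^\star_{\mathcal P}$, (1) then drops out as a corollary, and (3) is essentially a shortest-path argument.

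For the first sentence of (2), I show that every accepting state appearing anywhere in an accepting trajectory $\bp$ lies in $F^\star_{\mathcal P}$. Let $A'$ denote the set of accepting states visited infinitely often in $\bp$; since $F_{\mathcal P}$ is finite, $A'$ is nonempty. For each $p \in A'$, the suffix of $\bp$ after any occurrence of $p$ must revisit $A'$ via a nonempty $\mathcal P$-path, so $A'$ is self-reachable and maximality of $F^\star_{\mathcal P}$ gives $A' \subseteq F^\star_{\mathcal P}$. Now let $p \in F_{\mathcal P}$ be any accepting state visited by $\bp$ at least once; the suffix of $\bp$ after the last occurrence of $p$ still hits $A' \subseteq F^\star_{\mathcal P}$, so $p$ reaches $F^\star_{\mathcal P}$ via a nonempty path. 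Then $F^\star_{\mathcal P} \cup \{p\}$ is a self-reachable subset of $F_{\mathcal P}$, so by maximality $p \in F^\star_{\mathcal P}$, which forces the graph distance and hence $V(p)$ to be zero. For the second sentence of (2), suppose for contradiction some $p \in F_{\mathcal P} \setminus F^\star_{\mathcal P}$ reaches $F^\star_{\mathcal P}$; then $F^\star_{\mathcal P} \cup \{p\}$ is again a self-reachable subset of $F_{\mathcal P}$ strictly larger than $F^\star_{\mathcal P}$, a contradiction, so $p$ cannot reach $F^\star_{\mathcal P}$ and $V(p) = \infty$.

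Claim (1) then follows as a corollary. For any state $p$ visited by an accepting trajectory $\bp$, the suffix of $\bp$ starting at $p$ still visits $F_{\mathcal P}$ infinitely often, and by (2) the accepting states it visits all lie in $F^\star_{\mathcal P}$. Hence $p$ reaches $F^\star_{\mathcal P}$ through this suffix, so $V(p)$ equals a finite shortest-path weight, i.e.\ $V(p) \neq \infty$.

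For claim (3), I use positivity of $\omega_{\mathcal P}$ together with the shortest-path characterization of $V$. Given $p$ with $0 < V(p) < \infty$, pick a shortest $\mathcal P$-path from $p$ to $F^\star_{\mathcal P}$ and let $p'$ be its first successor. The remaining sub-path from $p'$ to $F^\star_{\mathcal P}$ witnesses $V(p') \le V(p) - \omega_{\mathcal P}(p,p') < V(p)$, where strict inequality uses $\omega_{\mathcal P}(p,p') > 0$. The main obstacle is the first assertion of (2) for accepting states that appear only finitely often in $\bp$: handling them cleanly requires invoking the maximality of $F^\star_{\mathcal P}$ twice, once to absorb the infinitely recurring set $A'$ and again to absorb each sporadic accepting state, and it implicitly leans on the non-blocking assumption on $\mathcal T$ (inherited by $\mathcal P$) to guarantee the infinite suffix needed to connect $p$ back into $F^\star_{\mathcal P}$.
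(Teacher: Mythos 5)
The paper itself does not prove this theorem: it states ``In \cite{ding2010receding}, we showed the following properties for $V$'' and imports the result, so there is no in-paper proof to compare against. Judged on its own, your argument is correct and self-contained. The decomposition is the natural one: you show the set $A'$ of accepting states visited infinitely often by $\bp$ is self-reachable (hence $A'\subseteq F^{\star}_{\Prod}$ by maximality), then absorb every other accepting state $p$ visited by $\bp$ by observing that $F^{\star}_{\Prod}\cup\{p\}$ is again a self-reachable subset of $F_{\Prod}$; the same absorption argument run in reverse gives $V(p)=\infty$ for accepting states outside $F^{\star}_{\Prod}$, part (1) follows since any state of $\bp$ has a suffix hitting $A'\subseteq F^{\star}_{\Prod}$, and part (3) is the standard shortest-path step using strict positivity of $\omega_{\Prod}$. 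Two minor remarks. First, your use of ``largest'' tacitly relies on the fact that the union of self-reachable subsets of $F_{\Prod}$ is self-reachable, so that the largest one contains every other; this is worth one sentence but is immediate. Second, your closing claim that the argument ``implicitly leans on the non-blocking assumption on $\mathcal T$ (inherited by $\mathcal P$)'' is both unnecessary and slightly off: the infinite suffix you need exists simply because an accepting trajectory is by definition infinite, and non-blockingness of $\mathcal T$ is in general \emph{not} inherited by $\Prod$ (the B\"uchi transition function can be empty on some inputs), so it is fortunate that nothing in the proof actually depends on it.
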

We see from the above facts that $V(p)$ resembles an energy-like function, which justifies the name we use.   We refer to the value of $V(p)$ at a state $p\in S_{\Prod}$ as the ``energy of the state''.  Note that satisfying the LTL formula is equivalent to reaching states where $V(p)=0$ for infinitely many times.  Therefore, for each state $p\in S_{\Prod}$, $V(p)$ provides a measure of progress towards satisfying the LTL formula.
An algorithm generating $V(p)$ for an arbitrary product automaton can be found in \cite{ding2010receding}.

\section{Main results:\\Receding Horizon Controller Design}
\label{sec:control}

In this section, we present a solution to Prob. \ref{problem:main}.   The central component of our control design is a state-feedback controller operating on the product automaton that optimizes finite trajectories over a pre-determined, fixed horizon $N$, subject to certain constraints.  These constraints ensure that the energy of states on the product automaton decreases in finite time, thus guaranteeing that progress is made towards the satisfaction of the LTL formula.  Note that the proposed controller does not enforce the energy to decrease at each time-step, but rather that it eventually decreases.
The finite trajectory returned by the receding horizon controller is projected onto $\T$, the controller applies the first transition, and this process is repeated again at the next time-step.

In this section, we first describe the receding horizon controller and show that it is feasible (a solution exists) at all time-steps $k\in\mathbb N$.  
Then, we present the general control algorithm and show that it always produces (infinite) trajectories satisfying the given LTL formula.

\subsection{Receding horizon controller}
\label{subsec:rhc}
In order to explain the working principle of the controller, we first define a finite \emph{predicted trajectory} on $\Prod$ at time $k$.    Denote the current state at time $k$ as $p_{k}$.  A predicted trajectory of horizon $N$ at time $k$ is a finite sequence $\bp_{k}:=p_{1|k}\ldots p_{N|k}$, where $p_{i|k}\in S_{\Prod}$ for all $i=1,\ldots,N$,  $p_{i|k}\to_{\Prod} p_{i+1|k}$ for all $i=1,\ldots N-1$ and $p_{k}\to_{\Prod} p_{1|k}$.  Here, $p_{i|k}$ is a notation used frequently in MPC, which denotes the $i$th state of the predicted trajectory at time $k$.  Moreover, we denote the set $\bP(p_{k},N)$
as the set of all finite trajectories of horizon $N$ from a state $p_{k}\in S_{\Prod}$.  Note that the finite predicted trajectory $\bp_{k}$ of $\Prod$ uniquely projects to a finite trajectory $\bq_{k}:=\gamma_{\T}(\bp_{k})$ of $\T$.

For the current state $q_{k}$ at time $k$, we denote the observed reward at any state $q\in Q$ as $R_{k}(q)$, and we have that
\be
R_{k}(q)=
\begin{cases}
\R(q, k)& \textrm{ if } q\in \N(q_{k},k)\\ 0 & \textrm{otherwise}.
\end{cases}
\ee
Note that $\R(q, k)=0$ if $q\notin \N(q_{k},k)$ because the rewards outside of the neighbourhood cannot be observed.  We can now define the \emph{predicted reward} associated with a predicted trajectory $\bp_{k}\in \bP(p_{k},N)$ at time $k$.  The predicted reward of $\bp_{k}$, denoted as $\Re_{k}(\bp_{k})$, is simply the amount of accumulated rewards by $\gamma_{\T}(\bp_{k})$ of $\T$:
\be
\label{def:predictedreward}
\Re_{k}(\bp_{k})=\sum_{i=1}^{N} R_{k}\left(\gamma_{\T}(p_{i|k})\right).
\ee


The receding horizon controller executed at the initial state at time $k=0$ is described next. This is a special case because the initial state of $\Prod$ is not unique, and as a result we can pick any initial state of $\mathcal P$ from the set $S_{\mathcal P0}=\{q_{0}\}\times S_{\mathcal B0}$.  We denote the controller executed at the initial state as $\rh^{0}(S_{\mathcal P0})$, and we define it as follows

\bea
\label{prob:initcontroller}
\bp_{0}^{\star}&=&\rh^{0}(S_{\mathcal P0})\nonumber\\&:=&\argmax_{\bp_{0} \in \{\bP(p_{0},N) \st V(p_{0})<\infty\}} \Re_{0}(\bp_{0}).
\eea

The controller maximizes the predicted cumulative rewards over all possible projected trajectories over horizon $N$ initiated from a state $p_{0}\in S_{\Prod 0}$ where the energy is finite, and returns the optimal projected trajectory $\bp_{0}^{\star}$.   The requirement that $V(p_{0})<\infty$ is critical because otherwise, the trajectory starting from $p_{0}$ cannot be accepting.  If there does not exist $p_{0}$ such that $V(p_{0})<\infty$, then an accepting trajectory does not exist and there is no trajectory of $\T$ satisfying the LTL formula (\ie Prob. \ref{problem:main} has no solution).

\begin{lemma}[Feasiblity of \eqref{prob:initcontroller}]
\label{lem:feasiblityInit}
Optimization problem \eqref{prob:initcontroller} always has at least one solution if there exists $p_{0}$ such that $V(p_{0})<\infty$.
\end{lemma}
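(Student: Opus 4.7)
The plan is to reduce the lemma to showing that the feasible set in \eqref{prob:initcontroller} is non-empty. Once that is established, the argmax is automatically attained: $\Prod$ has finitely many states, so $\bP(p_0,N)$ is a finite set, and \eqref{def:predictedreward} assigns a finite real value to each of its elements, so some maximizer exists.

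First I would fix a $p_0 \in S_{\Prod 0}$ with $V(p_0)<\infty$, whose existence is the standing hypothesis, and exhibit an element of $\bP(p_0,N)$ by a two-phase explicit construction. In Phase~1, I would apply property~(iii) of Theorem~\ref{thm:Vfacts} iteratively: as long as the current state $p$ satisfies $0 < V(p) < \infty$, there is a successor $p'$ with $V(p') < V(p)$, and $V(p')$ remains finite. Since $V$ takes only finitely many distinct values on the finite set $S_{\Prod}$, this strictly decreasing sequence must terminate, in finitely many transitions, at a state $p^{\star}$ with $V(p^{\star}) = 0$, and by property~(ii) of Theorem~\ref{thm:Vfacts} we have $p^{\star} \in F^{\star}_{\Prod}$.

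In Phase~2, I would extend the trajectory past $p^{\star}$ using the self-reachability of $F^{\star}_{\Prod}$: by definition, every state in $F^{\star}_{\Prod}$ admits a finite path in $\Prod$ leading to another state of $F^{\star}_{\Prod}$. Concatenating such paths indefinitely yields an infinite trajectory starting at $p_0$, and truncating to its first $N$ transitions gives an element of $\bP(p_0,N)$. Hence the feasible set is non-empty and the lemma follows.

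The main obstacle I anticipate is ensuring the construction never stalls at a state without outgoing transitions: $\T$ being non-blocking is not by itself enough, because the product construction in Def.~\ref{def:PA} can introduce dead ends in $\Prod$. This is precisely the role of the two tools invoked above, property~(iii) of Theorem~\ref{thm:Vfacts} in Phase~1 and the self-reachability of $F^{\star}_{\Prod}$ in Phase~2, which together rule out the possibility of getting stuck before reaching horizon $N$.
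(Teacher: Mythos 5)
Your proof is correct, but it takes a genuinely different route from the paper's. The paper disposes of this lemma in one sentence: since $\T$ is non-blocking, $\bP(p_{0},N)$ is non-empty, hence a maximizer exists over the finite feasible set. It does not invoke Theorem~\ref{thm:Vfacts} or the structure of $F^{\star}_{\Prod}$ at all. Your two-phase construction (descend the energy via property~(iii) until reaching $F^{\star}_{\Prod}$, then cycle within $F^{\star}_{\Prod}$ by self-reachability, then truncate at horizon $N$) is doing strictly more work, and that extra work addresses a real issue you correctly identified: non-blockingness of $\T$ does not by itself transfer to $\Prod$, since a product state $(q,s)$ has no successor whenever $\delta_{\B}(s,h(q))=\emptyset$, which can happen for a nondeterministic B\"uchi automaton produced from an LTL formula. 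So the paper's one-liner is either implicitly assuming completeness of $\B$ or silently relying on exactly the finite-energy argument you make explicit; your version closes that gap. The only nitpick is your citation of property~(ii) of Theorem~\ref{thm:Vfacts} for the claim that $V(p^{\star})=0$ implies $p^{\star}\in F^{\star}_{\Prod}$: that implication follows more directly from the definition of $V$ as a graph distance to $F^{\star}_{\Prod}$ with strictly positive edge weights, rather than from property~(ii), which speaks about accepting states along accepting trajectories. This does not affect the validity of the argument.
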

\begin{proof}
The proof follows from the fact that $\T$ is non-blocking, and thus the set $\bP(p_{0},N)$ is not empty.
\end{proof}
%

Next, the receding horizon control algorithm for any time instant $k=1,2,\ldots$ and corresponding state $p_{k}\in S_{\Prod}$ is presented.  This controller is of the form
\be
\label{eq:rhcontroller}
\bp_{k}^{\star}=\rh(p_{k}, \bp^{\star}_{k-1})
\ee
\ie it depends both on the current state $p_{k}$ and the optimal predicted trajectory $\bp^{\star}_{k-1}=p^{\star}_{1|k-1}\ldots p^{\star}_{N|k-1}$ obtained at the previous time-step.    Note that, by the nature of a receding horizon control scheme, the first control of the previous predicted trajectory is always applied.  Therefore, we have the following equality
\be
\label{eq:recedingEq}
p_{k}=p^{\star}_{1|k-1}, k=1,2,\ldots .
\ee
As it will become clear in the text below, $\bp^{\star}_{k-1}$ is used to enforce repeated executions of this controller to eventually reduce the energy of the state on $\Prod$ to $0$.

We define controller \eqref{eq:rhcontroller} with the following three cases:
\subsubsection{Case 1. $V(p_{k})>0$ and $V(p^{\star}_{i|k-1})\neq 0$ for all $i=1,\ldots,N$} In this case, the receding horizon controller is defined as follows.
\bea
\label{rhcontrollercase1}
\bp^{\star}_{k}&=&\rh(p_{k}, \bp^{\star}_{k-1})\nonumber\\
&:=&\argmax_{\bp_{k} \in \bP(p_{k},N)} \Re_{k}(\bp_{k}), \nonumber\\
&&\textrm{subject to: } V(p_{N|k})< V(p^{\star}_{N|k-1}).
\eea

The key to guarantee that the energy of the states on $\Prod$ eventually decreases is the terminal constraint $V(p_{N|k})< V(p^{\star}_{N|k-1})$, \ie the optimal finite predicted trajectory $p^{\star}_{k}$ must end at a state with lower energy than that of the previous predicted trajectory $p^{\star}_{k-1}$.  This terminal constraint mechanism is graphically illustrated in Fig.~\ref{fig:RHcontrollerCase1and2}.
\begin{figure}[!ht]
	\centering
	\includegraphics[scale=.45]{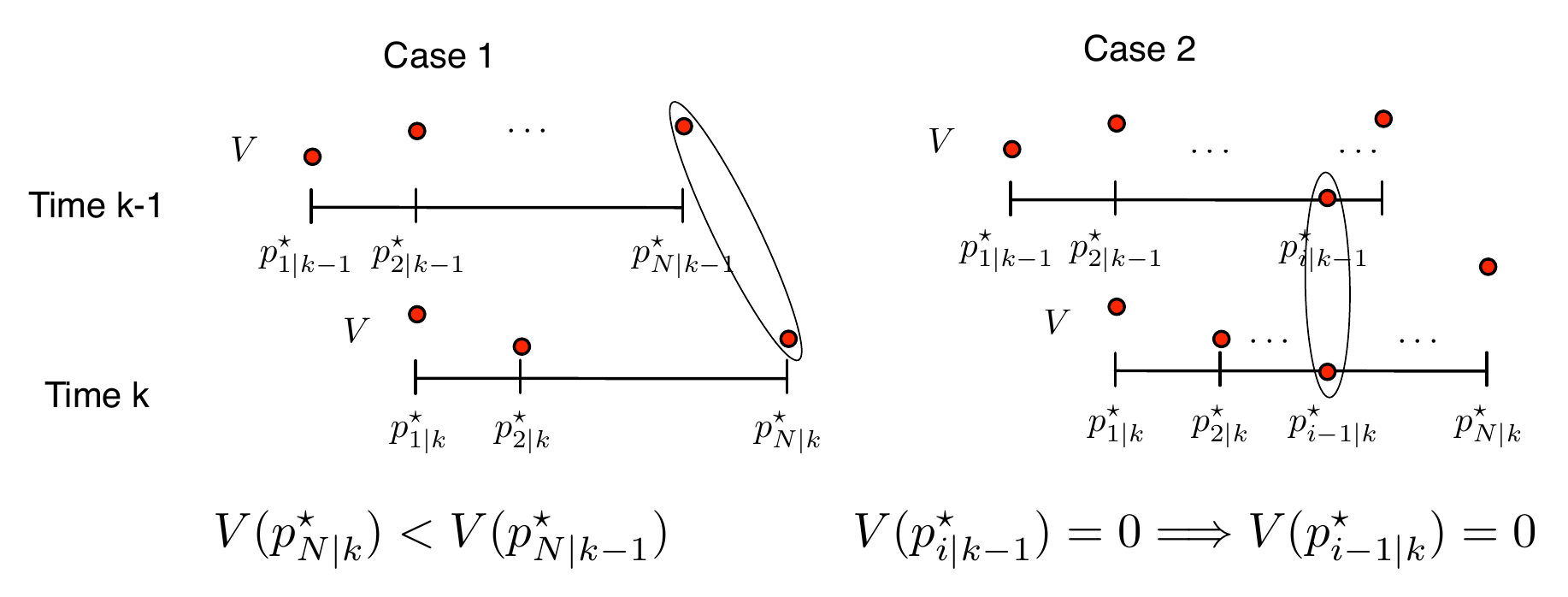}\caption{Constraints enforced for the receding horizon control law $\bp_{k}^{\star}=\rh(p_{k}, \bp^{\star}_{k-1})$ for Cases 1 and 2.  }
\label{fig:RHcontrollerCase1and2}
\end{figure}

To verify the feasibility of the optimization problem under this constraint, we make use of the third property of $V$ in Thm. \ref{thm:Vfacts}.  Namely, each state with positive finite energy can make a transition to a state with strictly lower energy.

\begin{lemma}[Feasibility of \eqref{rhcontrollercase1}]
\label{lem:feasibilityCase1}
Optimization problem \eqref{rhcontrollercase1} always has at least one solution if $V(p_{k})<\infty$.
\end{lemma}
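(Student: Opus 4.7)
My plan is to establish feasibility via an MPC-style shift-and-extend construction, using property (iii) of Theorem~\ref{thm:Vfacts} to furnish the crucial energy-decreasing extension. Let $\bp^\star_{k-1} = p^\star_{1|k-1}\cdots p^\star_{N|k-1}$ denote the optimal predicted trajectory at the preceding time instant. By the receding horizon identity \eqref{eq:recedingEq}, $p_k = p^\star_{1|k-1}$.

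I will define a candidate $\tilde{\bp}_k \in \bP(p_k, N)$ by setting $\tilde{p}_{i|k} := p^\star_{i+1|k-1}$ for $i = 1, \ldots, N-1$ and choosing $\tilde{p}_{N|k}$ as an energy-decreasing successor of $p^\star_{N|k-1}$. The first $N-1$ transitions, including $p_k \to_\Prod \tilde{p}_{1|k}$, are inherited directly from $\bp^\star_{k-1}$ being a valid predicted trajectory on $\Prod$. For the terminal step, the Case~1 hypothesis $V(p^\star_{i|k-1}) \neq 0$ applied at $i = N$ gives $V(p^\star_{N|k-1}) > 0$, and once the finiteness $V(p^\star_{N|k-1}) < \infty$ is also in hand, property (iii) of Theorem~\ref{thm:Vfacts} supplies $\tilde{p}_{N|k}$ with $V(\tilde{p}_{N|k}) < V(p^\star_{N|k-1})$. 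The candidate $\tilde{\bp}_k$ then satisfies the terminal constraint of \eqref{rhcontrollercase1}, so the feasible set is nonempty and---being a finite collection---the argmax is attained.

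The main obstacle is verifying $V(p^\star_{N|k-1}) < \infty$, which is not automatic from the hypothesis $V(p_k) < \infty$ alone since energy can rise along a single transition. I expect this to be discharged inductively: for $k \geq 2$ under Case~1, the preceding terminal constraint forces $V(p^\star_{N|k-1}) < V(p^\star_{N|k-2})$, propagating finiteness. The base case $k = 1$ needs to be seeded by a separate observation about \eqref{prob:initcontroller}; iterating property (iii) of Theorem~\ref{thm:Vfacts} from $p_0$ exhibits \emph{some} length-$N$ trajectory of strictly decreasing (hence finite) energies, and combining this with a refinement of the initial controller (or treatment deferred to Cases~2 and 3) secures that $V(p^\star_{N|0}) < \infty$ whenever the induction requires it. With finiteness in place, the shift-and-extend construction above delivers the feasible candidate.
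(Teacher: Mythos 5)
Your candidate construction is exactly the paper's: shift $\bp^{\star}_{k-1}$ forward one step using \eqref{eq:recedingEq}, so that $\tilde p_{i|k}=p^{\star}_{i+1|k-1}$ for $i=1,\ldots,N-1$, and append a terminal state supplied by Thm.~\ref{thm:Vfacts}~(iii) applied to $p^{\star}_{N|k-1}$, whose energy is positive by the Case~1 hypothesis. Where you go beyond the paper is the finiteness of $V(p^{\star}_{N|k-1})$: the paper's proof invokes Thm.~\ref{thm:Vfacts}~(iii) at that state without comment, silently assuming $V(p^{\star}_{N|k-1})<\infty$, whereas you correctly observe that this does not follow from $V(p_{k})<\infty$ alone, since the energy can increase along a transition. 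Your inductive discharge is the right idea --- Case~3 at time $k-1$ forces $V(p^{\star}_{N|k-1})<\infty$ directly, and Case~1 at time $k-1$ propagates finiteness from $V(p^{\star}_{N|k-2})$ --- but, as you acknowledge, the seed at $k=1$ is not provided by \eqref{prob:initcontroller} as stated, which constrains only $V(p_{0})$ and says nothing about the terminal energy of $\bp^{\star}_{0}$. The paper does not close this loose end either (Thm.~\ref{thm:feasiblityAlg} tracks only the finiteness of $V(p_{k})$, not of the predicted terminal states), so the clean fixes are either to strengthen the lemma's hypothesis to include $V(p^{\star}_{N|k-1})<\infty$ or to add the terminal constraint $V(p_{N|0})<\infty$ to the initial controller; with either amendment your argument, which is otherwise the same shift-and-extend proof as the paper's, is complete.
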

\begin{proof}
Given $\bp^{\star}_{k-1}=p^{\star}_{1|k-1}\ldots p^{\star}_{N|k-1}$, since $p_{k}=p^{\star}_{1|k-1}$, we have $p_{k}\to_{\Prod} p^{\star}_{2|k-1}$.  Therefore, we can construct a finite predicted trajectory $\bp_{k}=p_{1|k}\ldots p_{N|k}$ where $p_{i|k}=p^{\star}_{i+1|k-1}$ for all $i=1,\ldots,N-1$.  Using Thm. \ref{thm:Vfacts} (iii), there exists a state $p$ where $p_{N-1|k}\to_{\Prod} p$ such that $V(p)<V(p_{N-1|k})$.  Setting $p_{N|k}=p$, the finite trajectory $\bp_{k}=p_{1|k}\ldots p_{N|k}\in \bP(p_{k},N)$ satisfies the constraint $V(p_{N|k})< V(p^{\star}_{N|k-1})$, and therefore \eqref{rhcontrollercase1} has at least one solution.
\end{proof}


\subsubsection{Case 2. $V(p_{k})>0$ and there exists $i\in \{1,\ldots,N\}$ with $V(p^{\star}_{i|k-1})=0$}
We denote $i^{0}(\bp^{\star}_{k-1})$ as the index of the first occurrence in $\bp^{\star}_{k-1}$ where the energy is $0$, \ie $V(p^{\star}_{i^{0}(\bp^{\star}_{k-1})|k-1})=0$.  We then propose the following controller.
\bea
\label{rhcontrollercase2}
\bp^{\star}_{k}&=&\rh(p_{k}, \bp^{\star}_{k-1})\nonumber\\
&:=&\argmax_{\bp_{k} \in \bP(p_{k},N)} \Re_{k}(\bp_{k}), \nonumber\\
&&\textrm{subject to: } V(p^{\star}_{i^{0}(\bp^{\star}_{k-1})-1|k})=0.
\eea

Namely, this controller enforces a state in the optimal predicted trajectory to have $0$ energy if the previous predicted trajectory contains such a state.   This constraint is illustrated in Fig. \ref{fig:RHcontrollerCase1and2}.  Note that, if $i^{0}(\bp^{\star}_{k-1})=1$, then from \eqref{eq:recedingEq}, the current state $p_{k}$ is such that $V(p_{k})=0$, and Case $2$ does not apply but Case $3$ (described below) applies instead.

\begin{lemma}[Feasibility of \eqref{rhcontrollercase2}]
\label{lem:feasibilityCase2}
Optimization problem \eqref{rhcontrollercase2} always has at least one solution if $V(p_{k})<\infty$.
\end{lemma}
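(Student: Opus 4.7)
The plan is to mimic the feasibility argument for Case 1 by constructing an explicit candidate trajectory via a one-step shift of the previous optimal trajectory, and to verify that this candidate satisfies the new constraint. First, I would note that Case 2 forces $i^{0} := i^{0}(\bp^{\star}_{k-1}) \geq 2$: the receding-horizon identity \eqref{eq:recedingEq} gives $p_{k} = p^{\star}_{1|k-1}$, so $i^{0} = 1$ would mean $V(p_{k}) = 0$, contradicting the Case 2 hypothesis $V(p_{k}) > 0$. Hence the constraint $V(p_{i^{0}-1|k}) = 0$ is well-posed (the index $i^{0}-1$ lies in $\{1,\ldots,N-1\}$).

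Next I would take the candidate obtained by shifting: define $p_{i|k} := p^{\star}_{i+1|k-1}$ for $i = 1, \ldots, N-1$. Since $\bp^{\star}_{k-1}$ is itself a valid trajectory on $\Prod$ and $p_{k} = p^{\star}_{1|k-1}$, all required transitions $p_{k} \to_{\Prod} p_{1|k}$ and $p_{i|k} \to_{\Prod} p_{i+1|k}$ are inherited from $\bp^{\star}_{k-1}$. Crucially, $p_{i^{0}-1|k} = p^{\star}_{i^{0}|k-1}$, whose energy is zero by the definition of $i^{0}$, so the constraint of \eqref{rhcontrollercase2} holds for this candidate.

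It then remains to append a valid successor $p_{N|k}$ of $p_{N-1|k} = p^{\star}_{N|k-1}$. I would split into subcases based on $V(p^{\star}_{N|k-1})$: if the energy is positive and finite, property (iii) of Theorem \ref{thm:Vfacts} provides a successor with strictly lower energy; if the energy equals $0$, then $p^{\star}_{N|k-1} \in F^{\star}_{\Prod}$ by the definition of $V$, and the self-reachability of $F^{\star}_{\Prod}$ supplies a successor as the first step of a path back to $F^{\star}_{\Prod}$.

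The main obstacle I anticipate is the possibility $V(p^{\star}_{N|k-1}) = \infty$, which is not precluded by the hypotheses and where neither property (iii) nor self-reachability applies directly. To sidestep this case, the alternative is to shift only up to index $i^{0}-1$, so that $p_{i^{0}-1|k} = p^{\star}_{i^{0}|k-1} \in F^{\star}_{\Prod}$, and then extend the trajectory by concatenating self-reachability paths inside $F^{\star}_{\Prod}$. Since each state of $F^{\star}_{\Prod}$ admits a path of length at least $1$ back to $F^{\star}_{\Prod}$, arbitrarily long trajectories from $p^{\star}_{i^{0}|k-1}$ can be assembled and truncated to supply exactly the $N - i^{0} + 1$ additional states needed. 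In either construction, the resulting $\bp_{k} \in \bP(p_{k},N)$ satisfies the constraint, so \eqref{rhcontrollercase2} admits at least one feasible point.
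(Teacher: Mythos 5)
Your proposal follows the same core strategy as the paper's proof: shift the previous optimal trajectory $\bp^{\star}_{k-1}$ by one step, observe that the state at index $i^{0}(\bp^{\star}_{k-1})-1$ of the shifted candidate has zero energy so the constraint of \eqref{rhcontrollercase2} holds, and then append a final state. Where you differ is in the last step, and your extra care is warranted: the paper simply appends ``any state $p_{N|k}$ with $p_{N-1|k}\to_{\Prod}p_{N|k}$ and $V(p_{N|k})<\infty$'' and cites Thm.~\ref{thm:Vfacts}~(iii), but that property only produces such a successor when $0<V(p^{\star}_{N|k-1})<\infty$. Your subcase $V(p^{\star}_{N|k-1})=0$ (handled via self-reachability of $F^{\star}_{\Prod}$) and, more importantly, your observation that $V(p^{\star}_{N|k-1})=\infty$ is not excluded by the hypotheses are both legitimate: the initial controller \eqref{prob:initcontroller} places no terminal constraint on $p^{\star}_{N|0}$, so at $k=1$ the shifted candidate can indeed end at an infinite-energy (possibly even blocking) state of $\Prod$. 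Your fallback construction --- shift only up to index $i^{0}-1$ and then extend inside $F^{\star}_{\Prod}$ by concatenating return paths, which exist by self-reachability and suffice because \eqref{rhcontrollercase2} imposes no condition on the states after index $i^{0}-1$ --- closes this gap cleanly. In short, your proof is correct and strictly more rigorous than the paper's own argument for this lemma.
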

\begin{proof}
Given $\bp^{\star}_{k-1}=p^{\star}_{1|k-1}\ldots p^{\star}_{N|k-1}$, since $p_{k}=p^{\star}_{1|k-1}$, we have $p_{k}\to_{\Prod} p^{\star}_{2|k-1}$.  Therefore, we can construct a finite predicted trajectory $\bp_{k}=p_{1|k}\ldots p_{N|k}$ where $p_{i|k}=p^{\star}_{i+1|k-1}$ for all $i=1,\ldots,N-1$.  If we let $p_{N|k}$ to be any state where $p_{N-1|k}\to_{\Prod} p_{N|k}$ and $V(p_{N|k})<\infty$, then $\bp_{k}=p_{1|k}\ldots p_{N|k}\in \bP(p_{k},N)$ satisfies the constraint.  Thm. \ref{thm:Vfacts} (iii) gurantees that such a state $p_{N|k}$ exists.
\end{proof}

\subsubsection{Case 3, $V(p_{k})=0$}
In this case, the terminal constraint is that energy value of the terminal state is finite.  The controller is defined as follows.
\bea
\label{rhcontrollercase3}
\bp^{\star}_{k}&=&\rh(p_{k}, \bp^{\star}_{k-1})\nonumber\\
&:=&\argmax_{\bp_{k} \in \bP(p_{k},N)} \Re_{k}(\bp_{k}).\nonumber\\
&&\textrm{subject to: } V(p_{N|k})<\infty.
\eea

\begin{lemma}[Feasiblity of \eqref{rhcontrollercase3}]
\label{lem:feasibilityCase3}
Optimization problem \eqref{rhcontrollercase3} always has at least one solution.
\end{lemma}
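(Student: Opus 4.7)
The goal is to exhibit at least one $\bp_{k} \in \bP(p_{k}, N)$ with $V(p_{N|k})<\infty$. Since $V$ is the graph distance to $F^{\star}_{\Prod}$ under a strictly positive weight function, the hypothesis $V(p_{k})=0$ is equivalent to $p_{k}\in F^{\star}_{\Prod}$. My strategy is to construct a feasible trajectory one state at a time, preserving the invariant that every state visited has finite energy.

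The core ingredient is the following auxiliary claim: from every $p\in S_{\Prod}$ with $V(p)<\infty$ there exists a successor $p'$, $p\to_{\Prod}p'$, with $V(p')<\infty$. If $V(p)>0$, this is immediate from property (iii) of Thm.~\ref{thm:Vfacts}, since a successor of strictly smaller energy than $p$ is in particular of finite energy. If $V(p)=0$, then $p\in F^{\star}_{\Prod}$, and by self-reachability of $F^{\star}_{\Prod}$ (its defining property as the \emph{largest} self-reachable subset of $F_{\Prod}$) there is a finite path from $p$ to some state in $F^{\star}_{\Prod}$; the first transition of that path produces a successor $p'$ that still reaches $F^{\star}_{\Prod}$, hence $V(p')<\infty$.

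Given the claim, I would set $p_{0|k}:=p_{k}$ and apply the claim $N$ times in succession to define $p_{1|k}, p_{2|k}, \ldots, p_{N|k}$, each as a finite-energy successor of the previous state. Each transition $p_{i|k}\to_{\Prod} p_{i+1|k}$ is valid by construction, so $\bp_{k}:=p_{1|k}\ldots p_{N|k}\in \bP(p_{k},N)$, and by the invariant $V(p_{N|k})<\infty$, which verifies the terminal constraint of \eqref{rhcontrollercase3}. Since $\bP(p_k,N)$ is finite, the maximum in \eqref{rhcontrollercase3} is attained on this nonempty feasible subset, yielding a solution. The only nontrivial step is the $V(p)=0$ branch of the auxiliary claim, where Thm.~\ref{thm:Vfacts}(iii) is silent; the fact that $F^{\star}_{\Prod}$ is self-reachable by construction is precisely what bridges this gap.
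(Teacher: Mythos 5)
Your proof is correct and follows essentially the same route as the paper's: build a feasible predicted trajectory one finite-energy successor at a time and note that the terminal constraint $V(p_{N|k})<\infty$ is then automatic. Your version is actually slightly more careful than the paper's, which justifies the first step with the parenthetical ``if not, then $V(p_k)$ must equal $\infty$'' and then chains Thm.~\ref{thm:Vfacts}(iii) ``by induction'' without addressing the possibility that an intermediate state again has energy $0$; your auxiliary claim, with the self-reachability of $F^{\star}_{\Prod}$ covering the $V(p)=0$ branch, closes exactly that gap.
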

\begin{proof}
If $V(p_{k})=0$, then there exists $p_{1|k}$ such that $p_{k}\to_{\Prod}p_{1|k}$ and $V(p_{1|k}) < \infty$ (if not, then $V(p_{k})$ must equal to $\infty$).  From Thm. \ref{thm:Vfacts} (iii), we have that there exists $p_{2|k}$ such that $p_{1|k} \to_{\Prod} p_{2|k}$ and $V(p_{2|k}) < V(p_{1|k}) < \infty$.  By induction, there exists $\bp_{k}\in \bP(p_{k},N)$ such that $V(p_{N|k})<\infty$.
\end{proof}

\begin{remark}
The proposed receding horizon control law is designed using an extension of the terminal constraint approach in model predictive control \cite{rawlings2009} to finite deterministic systems. The particular setting of the B\"uchi acceptance condition, combined with the energy function $V$, makes it possible to obtain a non-conservative analogy of the terminal constraint approach, via either a terminal inequality condition \eqref{rhcontrollercase1} 
or a terminal equality condition \eqref{rhcontrollercase2}.
\end{remark}

\subsection{Control algorithm and its correctness}
\label{subsec:controlalg}
The overall control strategy for the transition system $\mathcal T$ is given in Alg. \ref{alg:main}.
After the off-line computation of the product automaton and the energy function, the algorithm applies the receding horizon controller $\rh^{0}(S_{\Prod 0})$ at time $k=0$, or $\rh(p_{k},\bp^{\star}_{k-1}))$ at time $k>0$.  At each iteration of the algorithm, the receding horizon controller returns the optimal predicted trajectory $\bp^{\star}_{k}$.  The immediate transition $(p_{k},p^{\star}_{1|k})$ is applied on $\Prod$ and the corresponding transition $(q_{k},\gamma_{\T}(p^{\star}_{1|k}))$ is applied on $\T$.  This process is then repeated at time $k+1$.

\begin{algorithm}
\caption{Receding horizon control algorithm for $\T=(Q,q_0,\Delta,\omega,\Pi,h)$, given an LTL formula $\phi$ over $\Pi$}
\begin{algorithmic}[1]\label{alg:main}
\small
\REQUIRE
\STATE Construct a B\"{u}chi automaton $\B = (S_{\B},S_{\B 0},2^{\Pi},\delta_\B,F_{\B})$ corresponding to $\phi$.
\STATE Construct the product automaton $\mathcal P=\mathcal T\times \mathcal B=(S_\Prod,S_{\Prod0},\Delta_\Prod, \omega_{\Prod},F_\Prod)$.  Find $V(p)$ for all $p\in S_{\Prod}$ \cite{ding2010receding}.
\end{algorithmic}
\begin{algorithmic}[1]
\ENSURE
\IF{there exists $p_{0}\in S_{\mathcal P0}$ such that $V(p_{0})\neq \infty$}
\STATE Set $k=0$.
\STATE Observe rewards for all $q\in \N(q_{0},k)$ and obtain $R_{0}(q)$.
\STATE Obtain $\bp^{\star}_{0}=\rh^{0}(S_{\Prod 0})$.
\STATE Implement transition $(p_{0}, p^{\star}_{1|0})$ on $\Prod$ and transition $(q_{0},\gamma_{\T}(p^{\star}_{1|0}))$ on $\T$.
\STATE Set $k=1$
\LOOP
\STATE Observe rewards for all $q\in \N(q_{k},k)$ and obtain $R_{k}(q)$.
\STATE Obtain $\bp^{\star}_{k}=\rh(p_{k},\bp^{\star}_{k-1})$.
\STATE Implement transition $(p_{k}, p^{\star}_{1|k})$ on $\Prod$ and transition $(q_{k},\gamma_{\T}(p^{\star}_{1|k}))$ on $\T$.
\STATE Set $k\leftarrow k+1$
\ENDLOOP
\ELSE
\STATE There is no run originating from $q_{0}$ that satisfies $\phi$.
\ENDIF
\end{algorithmic}
\end{algorithm}

First, we show that the receding horizon controllers used in Alg. \ref{alg:main} are always feasible.  We use a recursive argument, which shows that if the problem is feasible for the initial state, or at time $k=0$, then it remains feasible for all future time-steps $k=1,2,\ldots$.
\begin{theorem}[Recursive Feasiblity]
\label{thm:feasiblityAlg}
 If there exists $p_{0}\in S_{\mathcal P0}$ such that $V(p_{0})\neq \infty$, then $\rh^{0}(S_{\Prod 0})$ is feasible and $\rh(p_{k},\bp^{\star}_{k-1}))$ is feasible for all $k=1,2,\ldots$.
\end{theorem}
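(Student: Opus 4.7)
I would proceed by induction on the time index $k$. The base case $k=0$ is essentially immediate: the hypothesis supplies some $p_{0}\in S_{\mathcal{P}0}$ with $V(p_{0})<\infty$, so Lemma~\ref{lem:feasiblityInit} yields feasibility of $\rh^{0}(S_{\mathcal P0})$ and an optimal prediction $\bp^{\star}_{0}$. For the induction hypothesis to be useful at $k=1$, I would additionally record that any $\bp_{0}\in\bP(p_{0},N)$ obtained by following the first $N$ steps of a shortest path from $p_{0}$ to $F^{\star}_{\mathcal P}$ is feasible, so one may without loss of generality assume that $\bp^{\star}_{0}$ contains at least one state of finite energy (otherwise such a path would give a strictly larger or equally good feasible candidate with this property).

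For the inductive step, assume that $\bp^{\star}_{k-1}=p^{\star}_{1|k-1}\ldots p^{\star}_{N|k-1}$ has been produced feasibly at time $k-1$, and put $p_{k}:=p^{\star}_{1|k-1}$. The crucial intermediate step is to show $V(p_{k})<\infty$, since this is the hypothesis under which Lemmas~\ref{lem:feasibilityCase1}, \ref{lem:feasibilityCase2} and \ref{lem:feasibilityCase3} apply. I would establish this by a case analysis on which of the three branches \eqref{rhcontrollercase1}--\eqref{rhcontrollercase3} was active at time $k-1$: in Case~1 the constraint $V(p^{\star}_{N|k-1})<V(p^{\star}_{N|k-2})$ propagates finiteness of the terminal energy inductively; in Case~2 the constraint $V(p^{\star}_{i^{0}-1|k-1})=0$ forces an intermediate state into $F^{\star}_{\mathcal P}$, which has zero (and hence finite) energy; in Case~3 the constraint $V(p^{\star}_{N|k-1})<\infty$ is explicit. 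In each case $\bp^{\star}_{k-1}$ contains a state $p^{\star}_{j|k-1}$ with $V(p^{\star}_{j|k-1})<\infty$, and the subtrajectory $p^{\star}_{1|k-1}\to_{\mathcal P}\cdots\to_{\mathcal P} p^{\star}_{j|k-1}$ together with a shortest path from $p^{\star}_{j|k-1}$ to $F^{\star}_{\mathcal P}$ exhibits a finite-length route from $p_{k}$ to $F^{\star}_{\mathcal P}$. Since $V$ is defined as the graph distance to $F^{\star}_{\mathcal P}$, this gives $V(p_{k})<\infty$.

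Once $V(p_{k})<\infty$ is in hand, the proof concludes by identifying which of the three cases is active at time $k$: Case~3 if $V(p_{k})=0$; Case~2 if $V(p_{k})>0$ and some $V(p^{\star}_{i|k-1})=0$; Case~1 otherwise. In every situation the corresponding feasibility lemma produces a solution $\bp^{\star}_{k}$, closing the induction. I would also verify in passing that $\bp^{\star}_{k}$ inherits the property ``contains at least one state of finite energy'' so that the induction hypothesis is genuinely regenerated.

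The step I expect to be the real work is the propagation of the finite-energy invariant through Case~2. Unlike Cases~1 and~3, Case~2 places no direct constraint on $V(p^{\star}_{N|k})$, so one cannot argue via the terminal state alone; the invariant has to be carried instead by the intermediate zero-energy state required by the Case~2 constraint, and a little care is needed to check that this zero-energy state is still \emph{visible} inside $\bp^{\star}_{k}$ (i.e.\ at an index at least~$1$) so that it continues to certify $V(p_{k+1})<\infty$ at the next time step.
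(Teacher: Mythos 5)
Your proposal is correct, but it takes a genuinely different --- and in fact more careful --- route than the paper at the one step that matters. The paper's proof is a short induction whose engine is the assertion that, by the definition of $V$, ``if $p\to_{\Prod}p'$ then $V(p')<\infty$ if and only if $V(p)<\infty$''; from this it concludes $V(p_{k+1})=V(p^{\star}_{1|k})<\infty$ immediately and invokes Lemmas~\ref{lem:feasibilityCase1}--\ref{lem:feasibilityCase3}. Only one direction of that biconditional actually follows from $V$ being a graph distance to $F^{\star}_{\Prod}$: a successor of finite energy forces finite energy of the predecessor, not conversely, since a finite-energy state may have a successor from which $F^{\star}_{\Prod}$ is unreachable. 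You instead extract, from whichever terminal constraint was active at time $k-1$, a witness state of finite energy inside $\bp^{\star}_{k-1}$ (the terminal state in Cases~1 and~3, the forced zero-energy state in Case~2) and concatenate the predicted subtrajectory with a shortest path to $F^{\star}_{\Prod}$ to certify $V(p_{k})<\infty$. That argument is valid and does not rest on the questionable direction of the biconditional; your worry that the Case~2 zero-energy state must sit at an index at least $1$ is exactly covered by the paper's remark that $i^{0}(\bp^{\star}_{k-1})=1$ diverts to Case~3. The one place where you are not quite proving the theorem as literally stated is $k=1$: since $\rh^{0}$ in \eqref{prob:initcontroller} places no constraint whatsoever on the predicted states, the argmax may legitimately return a $\bp^{\star}_{0}$ containing no finite-energy state, and your ``without loss of generality'' is really a tie-breaking modification of the controller rather than a consequence of its definition. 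This is a defect of the construction that the paper's proof conceals behind the too-strong biconditional; your patch (or, equivalently, adding the constraint $V(p_{N|0})<\infty$ to \eqref{prob:initcontroller}) is the right repair, but it should be presented as a strengthening of the controller rather than as a WLOG.
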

\begin{proof}
From Lemma \ref{lem:feasiblityInit}, $\rh^{0}(S_{\Prod 0})$ is feasible.   From the definition of $V(p)$, for all $p\in S_{\Prod}$, if $p\to_{\Prod}p'$, then $V(p')<\infty$ if and only if $V(p)<\infty$.  Since $\rh^{0}(S_{\Prod 0})$ is feasible, we have $p_{1}=p^{\star}_{1|0}$ and thus $V(p_{1})<\infty$.  At each time $k>0$, if $V(p_{k})<\infty$, from Lemmas \ref{lem:feasibilityCase1}, \ref{lem:feasibilityCase2} and \ref{lem:feasibilityCase3}, we have that controller $\rh(p_{k},\bp^{\star}_{k-1})$ is feasible.  Since $p_{k+1}=p^{\star}_{1|k}$, we have $V(p_{k+1})<\infty$.  Using induction we have that $\rh(p_{k},\bp^{\star}_{k-1})$ is feasible for all $k=1,2,\ldots$.
\end{proof}
Finally, we show that Alg. \ref{alg:main} always produces an infinite trajectory satisfying the given LTL formula $\phi$, giving a solution to Prob. \ref{problem:main}.

\begin{theorem}[Correctness of Alg. \ref{alg:main}]
Assume that there exists a satisfying run originating from $q_{0}$ for a transition system $\mathcal T$ and an LTL formula $\phi$. Then, Alg. \ref{alg:main} produces an (infinite) trajectory $\bq=q_{0}q_{1}\ldots$ satisfying $\phi$.
\end{theorem}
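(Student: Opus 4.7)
The plan is to reduce the correctness claim to showing that the infinite trajectory $\bp = p_{0}p_{1}\ldots$ produced on $\Prod$ visits $\{p \st V(p) = 0\}$ infinitely often, and then to establish this by contradiction using, for each of Cases 1 and 2, a quantity that decreases strictly along executions and lives in a finite well-ordered set.

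First I would invoke Thm.~\ref{thm:feasiblityAlg} to assert that the algorithm generates an infinite trajectory $\bp$ on $\Prod$ with $V(p_{k}) < \infty$ for every $k$, and set $\bq := \gamma_{\T}(\bp)$. By the construction of $\Prod$, $\bq$ satisfies $\phi$ iff $\bp$ visits $F_{\Prod}$ infinitely often; by Thm.~\ref{thm:Vfacts}(ii), the states with $V=0$ are precisely those in $F_{\Prod}^{\star} \subseteq F_{\Prod}$. It therefore suffices to prove $V(p_{k}) = 0$ for infinitely many $k$.

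For the core argument I would assume, for contradiction, that some $k_{0}$ exists with $V(p_{k}) > 0$ for all $k \geq k_{0}$, so Case 3 is never triggered past $k_{0}$ and only Cases 1 and 2 are invoked. For Case 2 the monotone quantity is the first-zero-energy index $i^{0}(\bp^{\star}_{k-1})$: its defining constraint forces $i^{0}(\bp^{\star}_{k}) \leq i^{0}(\bp^{\star}_{k-1}) - 1$, while the standing assumption forces $i^{0} \geq 2$ (otherwise $V(p_{k+1}) = V(p^{\star}_{1|k}) = 0$, a Case 3 state). Since $i^{0} \leq N$, Case 2 can persist for at most $N-1$ consecutive steps before sliding the zero-energy state into position $1$ and forcing Case 3 at the following time step, a contradiction. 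For Case 1 the monotone quantity is the terminal energy $V(p^{\star}_{N|k})$, which the Case 1 constraint forces to decrease strictly along consecutive Case 1 steps; since the image of $V$ on $S_{\Prod}$ is a finite set of non-negative reals and Lemma~\ref{lem:feasibilityCase1} together with Thm.~\ref{thm:Vfacts}(iii) keeps the strict-decrease constraint feasible from any positive finite energy, this sequence must reach $0$ within finitely many steps. Once it does, $\bp^{\star}_{k}$ contains a zero-energy state, so time $k+1$ falls into Case 2 or Case 3 --- each yielding a contradiction via the previous paragraph or directly --- which closes the argument.

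The main obstacle will be handling the interplay between the two regimes cleanly: a priori the closed-loop trajectory could alternate between Cases 1 and 2 in arbitrary patterns, so one must prevent the Case 1 strict-decrease clock from resetting after excursions into Case 2. The key structural observation that resolves this is that any Case 2 step enforces a zero-energy state inside the current predicted trajectory, so the following time step automatically fails the Case 1 hypothesis; Case 2 is therefore an absorbing regime that exits only into Case 3. With this, Cases 1 and 2 cannot trade off indefinitely under the contradiction hypothesis, and the two finite bounds combine to force a Case 3 visit in finitely many steps past any $k_{0}$, yielding $V(p_{k}) = 0$ for infinitely many $k$ as required.
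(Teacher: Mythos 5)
Your proposal is correct and follows essentially the same route as the paper's own proof: both reduce correctness to showing $V(p_k)=0$ infinitely often and establish this via the same two monotone quantities --- the strictly decreasing terminal energy $V(p^{\star}_{N|k})$ under Case 1 (using finiteness of the range of $V$) and the decrementing first-zero index $i^{0}$ under Case 2 --- with the only difference being that you argue by contradiction while the paper argues directly. Your explicit observation that Case 2 is absorbing until it exits into Case 3 is a slightly cleaner handling of the case interplay than the paper's, but it is the same underlying argument.
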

\begin{proof}
If there exits a satisfying run originating from $q_{0}$, then there exists a state $p_{0}\in S_{\Prod 0}$ such that $V(p_{0})<\infty$.  Therefore, from Thm. \ref{thm:feasiblityAlg}, the receding horizon controller is feasible for all $k>0$, and Alg. \ref{alg:main} will always produce an infinite trajectory $\bq$.

At each state $p_{k}$ at time $k>0$, if $V(p_{k})>0$, then either Case 1 or Case 2 of the controller $\rh(p_{k})$ applies.  If Case 1 applies, since $V(p^{\star}_{k|N})>V(p^{\star}_{k+1|N}>V(p^{\star}_{k+2|N})\ldots$, there exists $j>k$ such that $V(p^{\star}_{j|N})=0$.  This is because the state-space $S_{\Prod}$ is finite, and therefore, there is only a finite number of possible values for the energy function $V(p)$.  At time $j$, Case 2 of the proposed controller becomes active until time $l=j+i^{0}(p^{\star}_{j})$, where $V(p_{l})=0$. Therefore, for each time $k$, if $V(p_{k})>0$, there exists $l>k$ such that $V(p_{l})=0$ by repeatedly applying the receding horizon controller.   If $V(p_{k})=0$, then Case 3 of the proposed controller applies, in which case either $V(p_{k+1})=0$ or $V(p_{k+1})>0$.  In either case, using the previous argument, there exists $j>k$ where $V(p_{j})=0$.

Therefore, at any time $k$, there exists $j>k$ where $V(p_{j})=0$.  Furthermore, since $j$ is finite, we can conclude that the number of times where $V(p_{k})=0$ is infinite.  By the definition of $V(p), p\in S_{\Prod}$, $V_{k}=0$ is equivalent to that $p_{k}\in F^{\star}_{\Prod}\subseteq F_{\Prod}$.  Therefore, the trajectory $\bp$ is accepting.  The trajectory produced on $\T$ is exactly the projection $\bq=\gamma_{\T}(\bp)$, and thus, it can be concluded that $\bq$ satisfies $\phi$, which completes the proof.
\end{proof}

\subsection{Discussions}
It is possible to extend the optimization problem of maximizing rewards to other meaningful cost functions.  For example, it is possible to assign penalties or costs on states of the system and minimize the accumulated cost of trajectories in the horizon.   It is also possible to define costs on state transitions and minimize the control effort (or the combination of this cost function with the one above).

The complexity of the off-line portion of Alg. \ref{alg:main} depends on the size of $\Prod$.  Denoting $|S |$ as the cardinality of a set $S$, from \cite{gastin2001fast}, a B\"{u}chi automaton translated from an LTL formula over $\Pi$ contains at most $|\Pi|\times 2^{|\Pi|}$ states\footnote{In practice, this upper limit is almost never reached (see \cite{KB-TAC08-LTLCon}).}.  Therefore, the size of $S_\Prod$ is bounded by $|Q|\times |\Pi|\times 2^{|\Pi|}$.  From \cite{ding2010receding}, the complexity of generating the energy function is $O(|S_{\Prod}|^{2}+|F_{\Prod}|^{3})$.  The complexity of the on-line portion of Alg. \ref{alg:main} is highly dependent on the horizon $N$.  If the maximal number of transitions at each state of $\Prod$ is $\Delta^{\mathtt{max}}_{\Prod}$, then the complexity at each iteration of the receding horizon controller is bounded by $(\Delta^{\mathtt{max}}_{\Prod})^{N}$, assuming a depth first search algorithm is used to find the optimal trajectory.   It may be possible to reduce this complexity from exponential to polynomial if one applies a more efficient graph search algorithm using Dynamic Programming.  This will be studied in future research.

\section{Software Implementation and Case Study}
\label{sec:example}
The control framework presented in this paper was implemented in a user friendly software package, available on \url{http://hyness.bu.edu/LTL_MPC.html}.  To utilize this software, a user needs to input the finite transition system $\T$, an LTL formula $\phi$, the horizon $N$, and a function $\R(q,k)$ that generates the time-varying rewards defined on the states of $\T$.  The software executes the control algorithm outlined in Alg. \ref{alg:main}, and produces a trajectory in $\T$ that satisfies $\phi$ and maximizes the rewards collected locally with the proposed receding horizon control laws.  This software uses the LTL2BA \cite{gastin2001fast} tool for the translation of an LTL formula to a B\"uchi automaton.

We now present a case study applying the software package.   In this case study, we use the transition system defined as vertices of a rectangular grid as shown in Fig. \ref{fig:Texample}.  We consider the following LTL formula, which expresses a robotic surveillance task:
\bea
\label{eq:LTLformulaex}
 \phi&:=&\Always \Event \mathtt{base} \n
&&\Land \Always(\mathtt{base}\longrightarrow \Next\neg \mathtt{base} \Until \mathtt{survey})\n
&&\Land \Always(\mathtt{survey}\longrightarrow \Next\neg \mathtt{survey} \Until \mathtt{recharge})\n
&&\Land \Always \neg \mathtt{unsafe}.
\eea

The first line of $\phi$, $\Always \Event \mathtt{base}$, enforces that the state with observation $\mathtt{base}$ is repeatedly visited (possibly for uploading data).   The second line ensures that after $\mathtt{base}$ is reached, the system is driven to a state with observation $\mathtt{survey}$, before going back to $\mathtt{base}$.  Similarly, the third line ensures that after reaching $\mathtt{survey}$, the system is driven to a state with observation $\mathtt{recharge}$, before going back to $\mathtt{survey}$.  The last line ensures that, at any time, the states with observation $\mathtt{unsafe}$ should be avoided.

We assume that at each state $q\in Q$, the rewards at state $q'$ can be observed if the Euclidean distance between $q$ and $q'$ is less than or equal to $25$.  In this case study, we define $\R(q,k)$ as follows. At time $k=0$, the reward value $\R(q,0)$ at each state $q$ is generated randomly by a uniform sampling in the range of $[10,25]$.  At each subsequent time $k>0$, if the reward value at a state is positive, then it decays with a specific rate.  Otherwise, there is a probability that a reward is assigned to this state with a value chosen by a uniform sampling in the range of $[10,25]$.   In this case study, the states with rewards can be seen as ``targets'', and the reward values can be seen as the ``amount of interest'' associated with each target.  The control objective of maximizing the collected rewards can be interpreted as maximizing the information gathered from surveying states with high interest.

\begin{figure}[!ht]
	\centering
	\label{fig:seq}
	\subfloat[]{	\includegraphics[scale=.4]{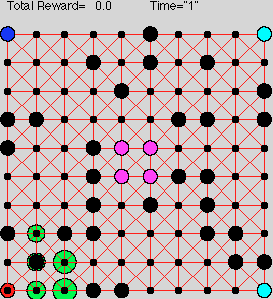}}
	\subfloat[]{	\includegraphics[scale=.4]{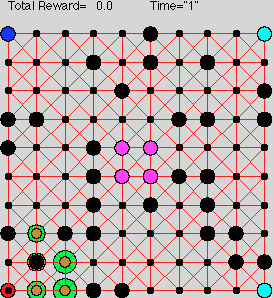}}\\
	\subfloat[]{	\includegraphics[scale=.4]{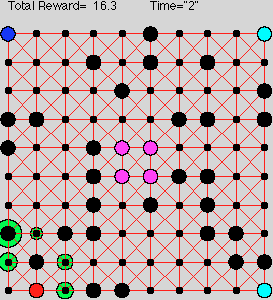}}
	\subfloat[]{	\includegraphics[scale=.4]{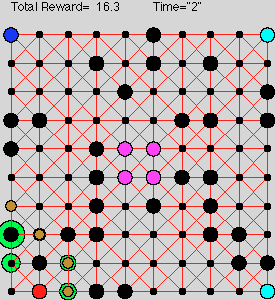}}
	\caption{Snapshots of the system trajectory under the proposed receding horizon control laws.  In all snapshots, the state with rewards are marked in green, where the size of the state is proportional with the associated reward.  {\bf a)} At time $k=0$, the initial state of the system is marked in red (in the lower left corner).  {\bf b)} The controller $\bp^{\star}_{0}=\rh^{0}(S_{\Prod 0})$ is computed at the initial state.  The optimal predicted trajectory $\bp^{\star}_{0}$ is marked by a sequence of states in brown.  {\bf c)} The first transition $q_{0}\to_{\T}q_{1}$ is applied on $\T$ and transition $p_{0}\to_{\Prod}p_{1}$ is applied on $\Prod$.  The current state ($q_{1}$) of the system is marked in red.  {\bf d)} The controller $\bp^{\star}_{1}=\rh(p_{1},\bp^{\star}_{0})$ is computed at $p_{1}$.  The optimal predicted trajectory $\bp^{\star}_{1}$ is marked by a sequence of states in brown.}
\label{fig:snapshots}
\end{figure}

By applying the method described in the paper, our software package first translates $\phi$ to a B\"uchi automaton $\B$, which has 12 states.  This procedure took $0.5$ second on a Macbook Pro with a 2.2GHz Quad-core CPU.  Since $\T$ contains $100$ states, $|S_\Prod|$ is $1200$.  The generation of the product automaton $\Prod$ and the computation of the energy function $V$ took $4$ seconds.   In this case study, we chose the horizon $N$ to be $4$.  By applying Alg. \ref{alg:main}, some snapshots of the system trajectory are shown in Fig. \ref{fig:snapshots}.   Each iteration of Alg. \ref{alg:main} took $1-3$ seconds (due to different numbers of graph searches needed, the computation time varies for each iteration).

We applies the control algorithm for $100$ time-steps.  We plotted the results after $100$ time-steps in Fig. \ref{fig:JandRR}.  At the top, we plot the energy $V(p)$ at the each time-step.  We see that after 55 time-steps, the energy is $0$, meaning that an accepting state is reached.    Note that, each time an accepting state is reached, the system visits the $\mathtt{base}$, $\mathtt{survey}$ and $\mathtt{recharge}$ states at least once \ie one cycle of the surveillance mission task ($\mathtt{base}$ -- $\mathtt{survey}$ -- $\mathtt{recharge}$) is completed.  We also compare the receding horizon controller with the controller proposed in \cite{ding2010receding} at the bottom of Fig. \ref{fig:JandRR}.  We clearly see that the receding horizon controller proposed in this paper performs better in terms of rewards collection, since it reacts much quicker to the time varying rewards.  An example video of the evolution of the system trajectory is also available at \url{http://hyness.bu.edu/LTL_MPC.html}.  

\begin{figure}[!ht]
	\centering
	\includegraphics[scale=.4]{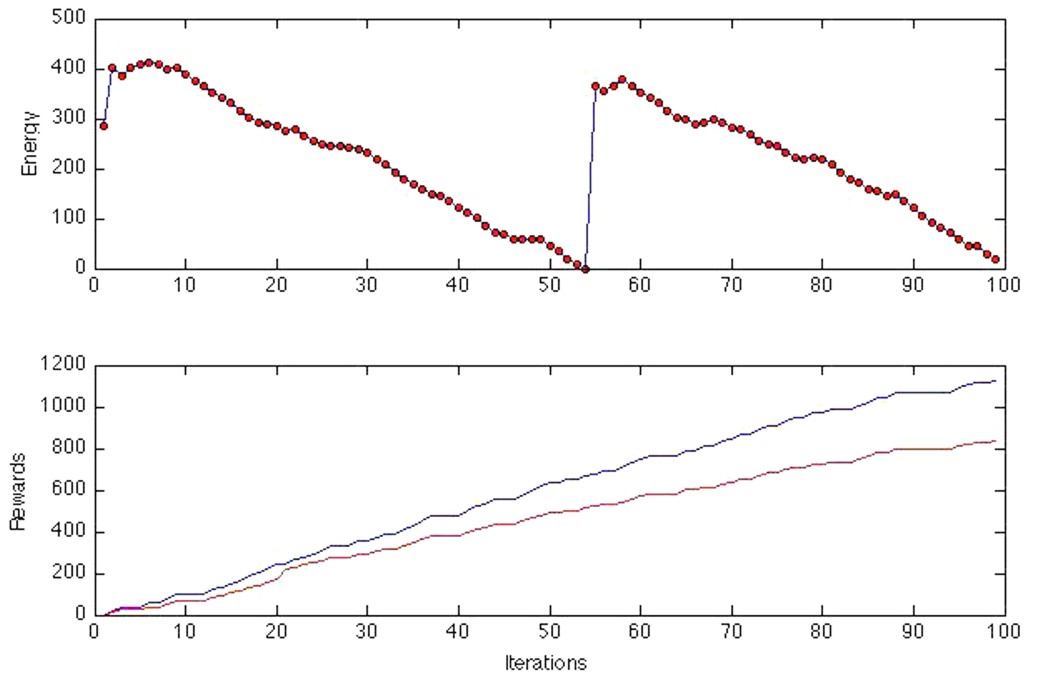}\caption{Upper figure: plot of energy $V(p)$ at the current state for $100$ time-steps.  Bottom figure: in blue, plot of the cumulative rewards collected in $100$ time-steps by the proposed receding horizon controller; in red, plot of the cumulative rewards collected by the controller in \cite{ding2010receding} using the same reward function $\R(q,k)$. }
\label{fig:JandRR}
\end{figure}




\section{Conclusion and final remarks}
\label{sec:conclusion}


In this paper, a receding horizon control framework that optimizes the trajectory of a finite deterministic system locally, while guaranteeing that the infinite trajectory satisfies a given linear temporal logic formula, was proposed. The optimization criterion was defined as maximization of time-varying rewards associated with the states of the system. A control strategy that makes real-time control decisions in terms of maximizing the reward while ensuring satisfaction of the LTL specification was developed. The proposed framework is a step toward synergy of model predictive control and formal controller synthesis, which is beneficial for both areas.

Future research deals with the extension of the proposed framework to finite probabilistic systems, such as Markov decision processes or partially observed Markov decision processes,  where the specifications are given as formulas of probabilistic temporal logic.


\bibliographystyle{IEEEtran}
\bibliography{Papers}

\begin{thebibliography}{10}
\providecommand{\url}[1]{#1}
\csname url@samestyle\endcsname
\providecommand{\newblock}{\relax}
\providecommand{\bibinfo}[2]{#2}
\providecommand{\BIBentrySTDinterwordspacing}{\spaceskip=0pt\relax}
\providecommand{\BIBentryALTinterwordstretchfactor}{4}
\providecommand{\BIBentryALTinterwordspacing}{\spaceskip=\fontdimen2\font plus
\BIBentryALTinterwordstretchfactor\fontdimen3\font minus
  \fontdimen4\font\relax}
\providecommand{\BIBforeignlanguage}[2]{{%
\expandafter\ifx\csname l@#1\endcsname\relax
\typeout{** WARNING: IEEEtran.bst: No hyphenation pattern has been}%
\typeout{** loaded for the language `#1'. Using the pattern for}%
\typeout{** the default language instead.}%
\else
\language=\csname l@#1\endcsname
\fi
#2}}
\providecommand{\BIBdecl}{\relax}
\BIBdecl

\bibitem{KB-TAC08-LTLCon}
M.~Kloetzer and C.~Belta, ``A fully automated framework for control of linear
  systems from temporal logic specifications,'' \emph{IEEE Transactions on
  Automatic Control}, vol.~53, no.~1, pp. 287--297, 2008.

\bibitem{HabColSchup06}
L.~Habets, P.~Collins, and J.~van Schuppen, ``Reachability and control
  synthesis for piecewise-affine hybrid systems on simplices,'' \emph{IEEE
  Transactions on Automatic Control}, vol.~51, pp. 938--948, 2006.

\bibitem{desai2002controlling}
J.~Desai, J.~Ostrowski, and V.~Kumar, ``Controlling formations of multiple
  mobile robots,'' in \emph{IEEE International Conference on Robotics and
  Automation}, 1998, pp. 2864--2869.

\bibitem{habets2007control}
L.~Habets, M.~Kloetzer, and C.~Belta, ``Control of rectangular multi-affine
  hybrid systems,'' in \emph{IEEE Conference on Decision and Control}, 2006,
  pp. 2619--2624.

\bibitem{Wongpiromsarn:2009p19}
T.~Wongpiromsarn, U.~Topcu, and R.~M. Murray, ``Receding horizon temporal logic
  planning for dynamical systems,'' in \emph{IEEE Conference on Decision and
  Control and Chinese Control Conference}, Shanghai, China, December 2009, pp.
  5997--6004.

\bibitem{belta2005discrete}
C.~Belta, V.~Isler, and G.~Pappas, ``Discrete abstractions for robot motion
  planning and control in polygonal environments,'' \emph{Robotics, IEEE
  Transactions on}, vol.~21, no.~5, pp. 864--874, 2005.

\bibitem{lindemann2007real}
S.~Lindemann, I.~Hussein, and S.~LaValle, ``Real time feedback control for
  nonholonomic mobile robots with obstacles,'' in \emph{IEEE Conference on
  Decision and Control}, New Orleans, LA, December 2007, pp. 2406--2411.

\bibitem{Milner89}
R.~Milner, \emph{Communication and Concurrency}.\hskip 1em plus 0.5em minus
  0.4em\relax Prentice-Hall, 1989.

\bibitem{Hadas-ICRA07}
H.~Kress-Gazit, G.~Fainekos, and G.~J. Pappas, ``Where's {W}aldo?
  {S}ensor-based temporal logic motion planning,'' in \emph{IEEE International
  Conference on Robotics and Automation}, 2007, pp. 3116--3121.

\bibitem{Karaman_mu_09}
S.~Karaman and E.~Frazzoli, ``Sampling-based motion planning with deterministic
  $\mu$-calculus specifications,'' in \emph{{IEEE} Conference on Decision and
  Control}, Shanghai, China, 2009, pp. 2222 -- 2229.

\bibitem{Loizou04}
S.~G. Loizou and K.~J. Kyriakopoulos, ``Automatic synthesis of multiagent
  motion tasks based on {LTL} specifications,'' in \emph{{IEEE} Conference on
  Decision and Control}, Paradise Islands, The Bahamas, December 2004, pp.
  153--158.

\bibitem{Clarke99}
E.~M. Clarke, D.~Peled, and O.~Grumberg, \emph{Model checking}.\hskip 1em plus
  0.5em minus 0.4em\relax MIT Press, 1999.

\bibitem{Piterman-2006}
N.~Piterman, A.~Pnueli, and Y.~Saar, ``Synthesis of reactive(1) designs,'' in
  \emph{International Conference on Verification, Model Checking, and Abstract
  Interpretation}, Charleston, SC, January 2006, pp. 364--380.

\bibitem{rawlings2009}
J.~B. Rawlings and D.~Q. Mayne, \emph{Model Predictive Control: Theory and
  Design}.\hskip 1em plus 0.5em minus 0.4em\relax Nob Hill Publishing, 2009.

\bibitem{li2006cooperative}
W.~Li and C.~Cassandras, ``A cooperative receding horizon controller for
  multivehicle uncertain environments,'' \emph{Automatic Control, IEEE
  Transactions on}, vol.~51, no.~2, pp. 242--257, 2006.

\bibitem{ding2010receding}
X.~C. Ding, C.~Belta, and C.~G. Cassandras, ``Receding horizon surveillance
  with temporal logic specifications,'' in \emph{{IEEE} Conference on Decision
  and Control}, December 2010, pp. 256--261.

\bibitem{browne1988characterizing}
M.~Browne, E.~Clarke, and O.~Grumberg, ``Characterizing finite kripke
  structures in propositional temporal logic,'' \emph{Theoretical Computer
  Science}, vol.~59, no. 1-2, pp. 115--131, 1988.

\bibitem{gastin2001fast}
P.~Gastin and D.~Oddoux, ``{Fast LTL to Buchi automata translation},''
  \emph{Lecture Notes in Computer Science}, pp. 53--65, 2001.

\end{thebibliography}

\end{document}